                            %%%
%%% Article NO LONGER uses Package "xr" to resolve references to the first part
%%%              - Package "xr"
%%%              - external document must be referenced without "_" and "-" in
%%%                the filename
%%%              - external document must be referenced without "_" and "-" in
%%%              - "*.aux"-file of THIS article must be deleted first
%%%              - first part must be translated with LaTeX/pdfLaTeX
%%%                such that "*.aux"-file is present
%%%              - \externaldocument{..} must contain basic name of
%%%                the filename of the first part (without ending ".tex")
%%%
%
\documentclass[preprint,12pt]{article}
\usepackage{cite} % automatic ordering of citation numbers in \cite
\usepackage{color}
\usepackage{graphicx}
\usepackage{amssymb}  % for \mathbb
\usepackage{amsmath}  % for align
\usepackage{amsthm}
\usepackage{hyperref}

%%%%%%%%%%%%%%%%%%%%% new_commands.tex %%%%%%%%%%%%%%%%%%%%%%%%%%%%%%%%%%%
%
% --> Preamble
%

\newcommand{\R}{{\mathbb R}}

%
% \newcommand{\qed}{\hspace*{\fill} $\blacksquare$}
%
 % 0.2
 % 0.5
\setcounter{totalnumber}{10} % 3
 % 0.7
 % 0.3
\setcounter{topnumber}{5} % 2
\setcounter{bottomnumber}{5} % 1
%
%   NEW THEOREMS
%
\newcommand{\cyr}{%
  \renewcommand\rmdefault{wncyr}%
  \renewcommand\sfdefault{wncyss}%
  \renewcommand\encodingdefault{OT2}%
  \normalfont\selectfont}
\DeclareTextFontCommand{\textcyr}{\cyr}

\newcommand{\Cfvar}[1]{\text{C}^{#1}}
\newcommand{\Cfone}]{\ifmmode\Cfvar{1}\fi}
\newcommand{\Cftwo}]{\ifmmode\Cfvar{2}\fi}

\newcommand{\Dn}{\mathcal{D}(\R^n)}
\newcommand{\Dnminus}{\mathcal{D}(\R^{n-1})}

\newcommand{\Done}{\mathcal{D}(\R)}
\newcommand{\Embed}[2]{J_{#1}(#2)}
\newcommand{\EmbedMap}[1]{J_{#1}}

\newcommand{\Prj}{\Pi}

\newcommand{\QuSubD}[2]{D_Q #1(#2)}

\newcommand{\Sph}{\mathcal{S}}
 %% One-dimensional sphere, i.e. { x\in R2: ||x||=1}
\newcommand{\Sphn}{{\mathcal{S}_{n-1}}} %% n-1-dimensional sphere, i.e. { x\in Rn: ||x||=1}
\newcommand{\Sn}{\Sphn}
\newcommand{\Snvar}[1]{\Sph_{#1-1}}

\newcommand{\co}{\operatorname{co}}

\newcommand{\direct}[1]{\overrightarrow{#1}}

\newcommand{\hfre}{\hfill\mbox{}}

\newcommand{\oldminus}{\ominus}

\newcommand{\skal}[2]{\langle #1, #2 \rangle}
\newcommand{\supf}[2]{\delta^*(#1,#2)}

\newcommand{\trans}{\top}

\def\co{\mathop{\textrm{co}}}
\def\cl{\mathop{\textrm{cl}}}

\renewcommand\int{\mathop{{\rm int}}}

\renewcommand{\a}{\alpha}
\renewcommand{\b}{\beta}

\renewcommand{\phi}{\varphi}

% \span already defined
\newcommand{\myspan}{\textrm{span\,}}

\newcommand{\dd}{{\overrightarrow \partial}}
 %%Convex subdifferential

% Demyanov-Rubinov difference

% -->%%%%%%%%%%%%%%%%%%%%% new_commands.tex %%%%%%%%%%%%%%%%%%%%%%%%%%%%%%%%%%%%

\newif\ifColPlotsUsed
% use colored plots
%%% \ColPlotsUsedfalse
% use black&white plots
\ColPlotsUsedtrue

\newtheorem{theorem}{Theorem}[section]
%%%
\newtheorem{corollary}[theorem]{Corollary}
\newtheorem{definition}[theorem]{Definition}

\newtheorem{lemma}[theorem]{Lemma}
\newtheorem{prop}[theorem]{Proposition}
\newtheorem{remark}[theorem]{Remark}

\newcounter{Examplecount}
\setcounter{Examplecount}{0}

\hyphenation{
  Lip-schitz
  sub-dif-fer-en-tiable
  sub-dif-fer-en-tial 
  sub-dif-fer-en-tials
  quasi-dif-fer-en-tiable
  quasi-dif-fer-en-tial 
  quasi-dif-fer-en-tials
}

\definecolor{cherry}{rgb}{0.7,0,0.2}
%\newcommand{\chV}[1]{\textcolor{cherry}{#1}}
%\newcommand{\ff}[1]{\textcolor{blue}{#1}}
%\newcommand{\noch}[1]{\textcolor{black}{#1}}

%%% set external document for package "xr":
%%%   \externaldocument[A-]{nonlin-DC_calcE1-bth_paper_01}
%%%   \externaldocument[A-]{nonlinDCcalcE1bthpaper01}
%%%   \externaldocument{nonlin-DC_calcE1-bth_paper_01} %%% NO! Does not work!
%%% \externaldocument{nonlinDCcalcBthPaper01}

%\title{Directed Subdifferentiable Functions \protect\newline
%       II.~Calculus Rules for the \protect\newline
%       Directed Subdifferential}

\title{From Quasidifferentiable to Directed Subdifferentiable Functions: 
         \\
       Exact Calculus Rules}
	
\author{
Robert Baier%
\thanks{Chair of Applied Mathematics, University of Bayreuth,
                    95440 Bayreuth, Germany, robert.baier@uni-bayreuth.de}\,,
Elza Farkhi%
\thanks{School of Mathematical Sciences, Sackler Faculty of
                    Exact Sciences, Tel Aviv University, 69978 Tel Aviv, Israel, elza@post.tau.ac.il (on leave from the Institute of Mathematics and Informatics of the Bulgarian Academy of Sciences)}\,
and Vera Roshchina%
\thanks{School of Science, RMIT University, Melbourne Vic 3001, Australia, vera.roshchina@rmit.edu.au}
}

\begin{document}

\maketitle

\vspace{-0.4cm}
\centerline{Dedicated to the memory of V.~F.~Demyanov}

\begin{abstract}
   %%% 44 words:
   We derive exact calculus rules for the directed subdifferential 
   defined for the class of directed subdifferentiable functions. 
   We also state optimality conditions, a chain rule
   and a mean-value theorem. Thus we extend the theory of the 
   directed subdifferential from quasidifferentiable to directed 
   subdifferentiable functions.
   %%%
   %%% 177 words:
   %%%   The directed subdifferential is based on directed sets which form a Banach space 
   %%%    and allow a continuation of the Minkowski sum and the scalar multiplication
   %%%    for differences of embedded convex compact sets. It was first introduced for
   %%%    differences of convex (DC) functions as the difference of the two corresponding embedded 
   %%%    Moreau-Rockafellar subdifferentials and then later extended to quasidifferentiable 
   %%%    and directed subdifferentiable functions. The latter notion requires the continuity 
   %%%    of the directional derivative with respect to the direction and a recursive directional 
   %%%    differentiability of the directional derivative on lower-dimensional subspaces being 
   %%%    orthogonal to the chosen direction together with a uniform boundedness. 
   %%%    This function class contains DC functions, 
   %%%    quasidifferentiable functions, amenable functions and lower/upper-$C^k$ functions.
   %%% 
   %%%    We continue the study of the directed subdifferental on this function class and 
   %%%    derive exact calculus rules for the product, the ratio and the pointwise maximum 
   %%%    or minimum of directed subdifferentiable functions. 
   %%%    Furthermore, we also state necessary optimality conditions with the order given 
   %%%    for directed sets, an exact form of the chain rule and a mean-value theorem. 
   %%%    Thus we extend the exact calculus rules of the directed subdifferential obtained 
   %%%    for quasidifferentiable functions to directed subdifferentiable functions.
\end{abstract}

\noindent {\bf Keywords: }
   nonconvex subdifferentials;
   directional derivatives;
   difference of convex (DC) functions; 
   %%% differences of sets
   mean-value theorem and chain rule for nonsmooth functions
   
\noindent {\bf MSC: } 49J52, %%% optimization: nonsmooth analysis
                      90C26, %%% operations research, mathem. programming:
                             %%% nonconvex programming, global optimization
                      26B25, %%% real functions: convexity, generalizations
                      %%% 26B12, %%% real functions: calculus of vector functions
                      58C20 %%% global analysis, analysis on manifolds: differentiation theory (Gateaux, Fr\'{e}chet, etc.)

\section{Introduction}

This paper is a continuation of \cite{BaiFarRosh2014a} in which we showed that the directed subdifferential 
introduced for \emph{differences of convex (DC, delta-convex)} functions from $\R^n$ to $\R$  \cite{BaierFarkhiDC} 
and extended for \emph{quasidifferentiable (QD)} functions \cite{BaiFarRosh2012a,BaiFarRosh2012b}, can be constructed 
from the directional derivatives only, without any information on the delta-convex structure of the function.
Both DC and QD functions are a continued matter of research in subdifferential calculus, see 
e.g.~\cite{HirUrr1985,DEM/JEY:1997,GAO:2004,AmaPenSya2008,DinMorNgh2009}.

Our work is strongly motivated and connected to the research of Vladimir Demyanov and his collaborators (see e.g.~\cite{DemRub1995}), 
who pioneered the study of constructive tools of nonsmooth analysis. The foundations of our developments can be found in 
quasidifferential calculus, but the main contribution of this work is that we eliminate the need for the DC representation 
of the directional derivative that is crucial for the construction of quasidifferential.
Thus we extend further the family of functions for which the directed subdifferential can be defined and
label the functions of the extended class as \emph{directed subdifferentiable}.  This class contains 
locally Lipschitz functions definable on o-minimal structures and quasidifferentiable functions (as well as 
such important subclasses as amenable functions and their negatives and lower/upper-$C^k$ functions for $k\ge 1$), see~\cite{BaiFarRosh2014a}. 
The directed subdifferential is visualized in $\R^n$ as a generally nonconvex set called 
\emph{Rubinov subdifferential}. The exact place of the Rubinov subdifferential in the chain of inclusions 
between other known subdifferentials is investigated in \cite{BaierFarkhiDC,BaiFarRosh2012b}. 
We only remind here that its convex hull is the Michel-Penot subdifferential (which is a subset 
of the Clarke subdifferential), and the Dini subdifferential is a convex part of it. 
All these known subdifferentials do not enjoy exact calculus rules in the nonconvex case
in contrast to the directed one. 
As we show here, the directed subdifferential enjoys exact calculus rules in the general class of directed subdifferentiable functions, 
and the same sharp optimality conditions as the Demyanov-Rubinov quasidifferential or the Dini subdifferential and superdifferential. 
The optimality conditions presented here are formulated using the order in the space of directed sets, 
but may be easily expressed by inclusion of the zero in the Rubinov subdifferential, as it is done in 
\cite{BaierFarkhiDC,BaiFarRosh2012a,BaiFarRosh2012b}.

%Moreover, the calculus rules and optimality conditions 
%for linear combinations, product, quotient, maximum and minimum of such functions 
%extend the respective ones known for the  directed subdifferential of quasidifferentiable (QD) functions \cite{BaiFarRosh2012b}. 

Note that the directional derivative \cite{HirUrr1985,DemRub1995}  is a 
main ingredient of the directed subdifferential 
as well as of the constructed calculus rules and
% the necessary and sufficient 
optimality conditions.
The exact calculus rules for the directional derivatives allow to
extend automatic differentiation algorithms to %certain 
structured nonsmooth 
functions, such as min- and max-type functions and their compositions
%(max-/min-functions, absolute values, etc.) which is an interesting new direction of research 
%%% (see~\cite{Griewank2011,Griewank2013,BeckMoseNaum2012,Griewank2012,KhanBarton2012}). 
(see~\cite{Griewank2011,Griewank2013,BeckMoseNaum2012,KhanBarton2012}). 
In some of these works, the propagation of nonsmooth convex or concave
McCormick relaxations \cite{McCorm1976} are used 
and subgradient propagation as e.g.~in~\cite{MitsChachBar2009} is applied.
In contrary to this approach with McCormick relaxations, where only a subset of the entire subdifferential 
can be obtained for some examples as stated in~\cite[Sec.~4.3]{MitsChachBar2009},
we rely on exact calculus rules for the directed subdifferential, which is more in-line with
the approach of V.~Demyanov and A.~Rubinov.

The inductive construction of the space of directed subdifferentiable functions with respect to the dimension 
is based on the same principle as the construction of the space of \emph{directed sets}
%%%, as well as on a recursive formula of the directional derivative (see Proposition \ref{prop:subd_dir_deriv} below)
and is reflected by the inductive proofs of our main results.
As the space of directed sets contains differences of 
embedded convex compact subsets of $\R^n$,  %similarly to the fact that  a DC or a QD function is a special case of a directed subdifferentiable function.
 directed subdifferentiable functions include DC and QD functions.

The paper is organized as follows. The calculus rules 
for algebraic operations on directed subdifferentiable functions
as well as for pointwise maximum and minimum are derived
in Section~\ref{SecMoreCalcRules}; in Section \ref{secOptCond} we study necessary %and sufficient 
optimality conditions for directed subdifferentiable function, and Section~\ref{SecChainMeanVal} contains results related to a chain rule and a mean-value theorem for this function class.
% In the last section we present some examples of directed subdifferentials, also including an example of a directed subdifferentiable function which is not quasidifferentiable.

We use the standard notation: by $\|\cdot \|$ we denote the Euclidean norm, 
and $\Sn$ is the unit sphere in $\R^n$.
We denote by $\cl{A}$ the \emph{closure} of the set $A$ and by $\co{A}$ its \emph{convex hull}.

\section{Calculus Rules} 
\label{SecMoreCalcRules}

The functions we are working with are a bit more general than tame (definable) locally Lipschitz ones. Readers familiar with tame or semialgebraic geometry may keep this in mind, as other examples of directed subdifferentiable functions are exotic and are unlikely to be met in applications; on the other hand, delta-convex and quasidifferentiable functions are also directed subdifferentiable \cite{BaiFarRosh2014a}.

A function from $\R^n$ to $\R$ is directed subdifferentiable if its Dini directional derivative restricted to lower-dimensional subspaces is $(n-1)$ times (recursively) directionally differentiable on $\R^n$, and all such recursive directional derivatives are uniformly bounded. We will soon give a precise definition.

For a function $f:\R^n\to \R$ we denote by $\nabla f(x)=f'(x)^T$ the \emph{gradient} of $f$ at 
the point $x \in \R^n$, and by $f'(x;l)$ we denote the 
\emph{(Dini) directional derivative} of $f$ at $x$ in the
direction $l \in \R^n$,
$$
f'(x;l):= \lim_{t\downarrow 0}\frac{f(x+tl) - f(x)}{t}.
$$
The function is called \emph{directionally
differentiable} at $x$ (see~\cite[Chap.~I, \S 3.1]{DemRub1995} and \cite{Schol1994}), 
if %all directional derivatives 
the limit $f'(x;l)$ at $x$ exists for all $l \in \Sn$.

Denote by $\Prj_{n-1,l}$ a fixed linear transformation from $\R^n$ to $\R^{n-1}$ that isometrically maps the orthogonal complement of the vector $l$ to $\R^{n-1}$, and maps $l$ to zero, see \cite{BaiFar2001,BaiFarRosh2012a}. The mapping $\Prj^T_{n-1,l}$ is the inverse of $\Prj_{n-1,l}$, and maps $\R^{n-1}$ onto the $(n-1)$-dimensional subspase of $\R^n$ that is orthogonal to $l$.

\begin{definition}[\bf directed subdifferentiable function] \label{def:dsf}
A function $f:\R \to \R$ is called \emph{$M$-directed subdifferentiable} at $x\in \R$ if both its left- and right-sided derivative exist and
$$
   \max \{|f'(x;-1)|,|f'(x;1)|\}\leq M \;.
$$
A function $f:\R^n\to \R$, $n\ge 2$, is called \emph{$M$-directed subdifferentiable} at $x\in \R^n$ if its directional derivative $f'(x;l)$ exists for every $l$, is continuous as a function of the direction $l$, is bounded by $M$, i.e.
$$
\max_{l\in \Sn} |f'(x;l)|\leq M ,
$$
and the restriction
$f_l(\cdot) = f'(x;l+\Prj^\trans_{n-1,l}(\cdot)):\R^{n-1}\to \R$ of its directional derivative to $l+\myspan \{l\}^\perp$ is also $M$-directed subdifferentiable at $0_{n-1}$ for all $l\in \Sn$.

We say that a function is {\em directed subdifferentiable} if it is $M$-directed subdifferentiable for some $M\geq 0$.
\end{definition}

Observe that every univariate directionally differentiable function $f:\R\to \R$ is directed subdifferentiable, as we only need to consider the `first order' directional derivatives which are trivially bounded.

The directed subdifferential is an object in the Banach space of {\em directed sets}, defined in~\cite{BaiFar2001,BaiFar2001b}. Directed sets are defined recursively in the dimension: a one-dimensional directed set is an ordered pair of two numbers, and for $n \geq 2$ a directed set maps $\Sn$ to a two-component object that consists of a directed set of dimension $n-1$ and a real number (the value of the generalized support function). The real-valued component must be continuous, and the directed set component must be uniformly bounded on $\Sn$ (in a recursively defined norm induced by the lower-dimensional directed sets and the lower-dimensional generalized support functions). The basic facts about the Banach space of directed sets can be found in~\cite{BaiFarRosh2012a,BaiFarRosh2014a}, and more details are available in~\cite{BaiFar2001,BaiFar2001b}. We provide the definition of the directed set and some other necessary definitions for convenience, even though this may make the paper appear a bit
  repetitive.

\begin{definition}
   We call $\direct{A}$  a {\em directed set}
    \begin{itemize}
       \item[(i)]  in $\R$, if it is a \emph{directed interval} $\direct{A}= \direct{[-a_1(-1),a_1(1)]}:=(a_1(-1), a_1(1))$. Its \emph{norm} is
                    $\| \direct{A} \|_1 =
                   \max\limits_{ l = \pm 1 } | a_1(l) |$,
       \item[(ii)] in $\R^n,$ $n \geq 2$, if there exist
                   a continuous function $a_n: \Sn \rightarrow \R$
                   (named as \emph{generalized support function})
                   and a map having
                   lower-dimensional directed sets as images
                   $\direct{A_{n-1}}: \Sn \rightarrow
                   \Dnminus$ which is uniformly bounded
                   with respect to $\| \cdot \|_{n-1}$.
    \end{itemize}
   We denote $\direct{A} = (\direct{A_{n-1}(l)}, a_n(l))_{l \in \Sn}$ and define its \emph{norm} as
    $$
       \| \direct{A} \| :=
       \| \direct{A} \|_n := \max\{ \sup\limits_{ l \in \Sn }
                                    \| \direct{A_{n-1}(l)} \|_{n-1},
                                    \max\limits_{ l \in \Sn } | a_n(l) | \} \;.
    $$
   The set of all directed sets in $\R^n$ is denoted by $\Dn$.

The \emph{embedding} $\EmbedMap{n}$ of convex compact sets into the space of directed sets $\Dn$ is defined as follows. We distinguish two cases, i.e.
\begin{itemize}
       \item[(i)]  in $\R$, if $A=[a^l,a^r]$, then the embedded directed interval is 
			$\direct{A}= \Embed{1}{ A } := (-a^l,a^r)$ 
                        which is also denoted as $\direct{[a^l,a^r]}$,
       \item[(ii)] in $\R^n,$ $n \geq 2$, for a convex compact $A$,  
			$\direct{A} = \Embed{n}{ A }$ is defined as the family of pairs
                        $(\direct{A_{n-1}(l)}, \delta^*(l,A))_{l \in \Sn}$,
    	                where 
                         \[
                            A_{n-1}(l):=\Prj_{n-1,l}(\rm{arg}\max_{x\in A} \,\skal{l}{x}) \quad (l \in \Sn), 
                         \]
                        and $\rm{arg}\max_{x\in A} \,\skal{l}{x}$ is
			the $(n-1)$-dimensional \emph{supporting} \emph{face} of $A$ and 
                         \[
			    \delta^*(l,A) =\max_{x\in A} \,\skal{l}{x}
                         \]
                        is the \emph{support function} of $A$.
\end{itemize}
\end{definition}

Thus, the convex compacts in $\R^n$ are embedded in the space of directed sets $\Dn$ with the help
of their support functions and supporting faces. Conversely, a directed set in $\Dn$ can be mapped
               to a compact set in $\R^n$ with the help of the visualization mapping $V_n$. Details on the visualization may
be found in~\cite{BaiFar2001b,BaiFarRosh2012a,BaiFarRosh2012b}. We only note that for embedded convex compact sets
the visualization map is the inverse of the embedding map. Since the directed subdifferential of a convex
function is the embedded convex subdifferential into $\Dn$ (see e.g.~\cite{BaierFarkhiDC,BaiFarRosh2012a,BaiFarRosh2012b}), the visualization of the directed subdifferential 
(which is called \emph{Rubinov subdifferential}) for a convex function coincides with its convex subdifferential.

The directed subdifferential is an object in the space of directed sets defined for every directed subdifferentiable function as follows.

\begin{definition}\label{def:ddd} Let $f:\R^n\to \R$ be directed subdifferentiable, 
then for any fixed $x\in \R^n$ the \emph{directed subdifferential} of $f$ at $x$ can be defined as follows:
\begin{itemize}
  \item[(i)] For $n=1$, define
	\begin{equation}
	\label{eq:n1}
	\dd f(x) := \overrightarrow{[-f'(x;-1);f'(x;1)]} 
                 := (f'(x;l))_{l = \pm 1} \,.
	\end{equation}
  \item[(ii)] For $n\geq 2$, define
  $$
   \dd f(x) := \left(\dd f_l(0), f'(x;l)\right)_{l\in S_{n-1}} \,,
  $$
  where $f_l:\R^{n-1}\to \R$ is defined by $f_l(\cdot) := f'(x;l+\Prj^\trans_{n-1,l}(\cdot))$. 
\end{itemize}
\end{definition}

We extend calculus rules already known for DC and quasidifferentiable functions to directed subdifferentiable functions, and introduce rules for additional operations: pointwise maximum and minimum, multiplication and division.

The next proposition shows that the directed subdifferential of the directional
derivative is the same as the one of the function $f$, thus generalizing~\cite[Proposition~3.15]{BaiFarRosh2010}, \cite[Proposition~5.2]{BaiFarRosh2012b} for DC resp.~QD functions. There is also a close link to the result in~\cite[(35)]{DEM/JEY:1997} in which
the Clarke's subdifferential of the directional derivative yields a smaller subdifferential,
the Michel-Penot subdifferential.

\begin{prop} \label{prop:subd_dir_deriv}
Let $f: \R^n \rightarrow \R$ be directed subdifferentiable in $x \in \R^n$.  Then
    \begin{align*}
       \dd [f'(x;\cdot)](0) & = \dd f(x) \,.
    \end{align*}
\end{prop}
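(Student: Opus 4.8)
The plan is to reduce the whole statement to a single structural fact: the directional derivative $g:=f'(x;\cdot):\R^n\to\R$ is positively homogeneous of degree one, so that its own directional derivative at the origin reproduces $g$ itself. First I would record the standard identity $f'(x;\lambda l)=\lambda f'(x;l)$ for $\lambda\ge 0$, together with $f'(x;0)=0$, which both follow immediately from the definition of the Dini directional derivative by the substitution $s=\lambda t$ in the difference quotient. This says precisely that $g$ is positively homogeneous with $g(0)=0$.

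The key consequence is that $g$ is directionally differentiable at $0$ with $g'(0;v)=g(v)=f'(x;v)$ for every $v\in\R^n$, since $g'(0;v)=\lim_{t\downarrow 0}\frac{g(tv)-g(0)}{t}=\lim_{t\downarrow 0}\frac{t\,g(v)}{t}=g(v)$. Before exploiting this I would verify that $\dd g(0)$ is even well defined, i.e.\ that $g$ is directed subdifferentiable at $0$. This is immediate from Definition~\ref{def:dsf}: the map $g'(0;\cdot)=f'(x;\cdot)$ is continuous and bounded by $M$, and the recursive restriction of $g$ at $0$ coincides (as computed below) with the restriction $f_l$, which is $M$-directed subdifferentiable by hypothesis on $f$.

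Then I would simply compare the recursive definitions of the two directed sets term by term. For $n=1$, Definition~\ref{def:ddd}(i) gives $\dd g(0)=(g'(0;-1),g'(0;1))=(f'(x;-1),f'(x;1))=\dd f(x)$. For $n\ge 2$, writing $g_l(\cdot):=g'(0;l+\Prj^\trans_{n-1,l}(\cdot))$, the homogeneity identity yields $g_l(\cdot)=g(l+\Prj^\trans_{n-1,l}(\cdot))=f'(x;l+\Prj^\trans_{n-1,l}(\cdot))=f_l(\cdot)$, so that $g_l=f_l$ as functions on $\R^{n-1}$ and hence $\dd g_l(0)=\dd f_l(0)$; likewise the scalar component satisfies $g'(0;l)=f'(x;l)$. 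Substituting into Definition~\ref{def:ddd}(ii) gives $\dd g(0)=(\dd g_l(0),g'(0;l))_{l\in S_{n-1}}=(\dd f_l(0),f'(x;l))_{l\in S_{n-1}}=\dd f(x)$.

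The main obstacle I anticipate here is conceptual rather than computational: the point to notice is that positive homogeneity collapses $g'(0;\cdot)$ back to $g$, so that no genuine induction on $n$ is needed — each level of the recursive definition of $\dd g(0)$ reproduces exactly the corresponding level of $\dd f(x)$ because the restriction $g_l$ \emph{literally equals} $f_l$, making $\dd g_l(0)=\dd f_l(0)$ an identity of the same object rather than a claim requiring the inductive hypothesis. The only care required is the well-definedness check above and keeping track of the isometry $\Prj^\trans_{n-1,l}$ so that the two recursive restrictions are identified correctly.
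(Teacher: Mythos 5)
Your proof is correct and follows essentially the same route as the paper's: both hinge on positive homogeneity of $f'(x;\cdot)$ giving $g'(0;\cdot)=f'(x;\cdot)$, after which the recursive definition of the directed subdifferential forces $\dd g(0)=\dd f(x)$ level by level. The paper compresses your term-by-term unwinding (and the well-definedness check) into the single remark that the directed subdifferential is determined uniquely by the directional derivative, so your version is just a more explicit rendering of the same argument.
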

\begin{proof}
Let $\phi(y):= f'(x;y)$. Then by the positive homogeneity of the directional derivative we have
$$
\phi'(0;l) = \lim_{t\downarrow 0 }\frac{f'(x;t l) - f'(x;0)}{t} 
= \lim_{t\downarrow 0 }\frac{t f'(x; l) - 0}{t} = f'(x;l).
$$
%%% hence, $\phi'(0;l ) = f'(x;l)$. 
%%% Thus, the claim is true in one dimension. 
%%% From here, the inductive proof is accomplished in a standard way.
Since the directed subdifferential is defined uniquely through the directional derivative, the result is proved for all state dimensions $n$.
\end{proof}

%%% \RB{??? Doesn't it follow from here that if
%%%  the (first) directional derivative $f'(x^0;\cdot)$ is uniformly bounded and continuous, then also its succesive directional derivatives are bounded and continuous ??? If this is true,
%%% then the recursive continuity (and boundedness) in lower dimensions may follow automatically from the above proposition and the definition of directed subdifferentiability gets much easier than it is now. In fact, we need only to require this property of the directional derivative $f'(x^0;\cdot)$.. Is my reasoning true??? }
%%% 
%%% \RB{In fact, why do we need the continuity
%%% of the directional derivative?? Isn't this continuity too restrictive???}

\subsection{Algebraic operations}

Our proofs of the calculus rules rely on the following well-known properties of the directional derivatives
in~\cite[Proposition~3.1 from Sec.~I.3]{DemRub1995}.
%proving properties of arithmetic operations for directed subdifferentiable functions:

\begin{lemma}\label{lem:DirDer} Let $f_1,f_2:\R^n\to \R$ be directionally differentiable at $x\in \R^n$. Then their linear combination, product and quotient (if $f_2(x) \ne 0$) are also directionally differentiable
at this point and the following formulas hold for $\alpha,\beta \in \R$, $l \in \R^n$:
\begin{align*}
(\alpha f_1+\beta f_2)'(x;l) & = \alpha f'_1(x;l)+\beta f'_2(x;l) \,,\\
(f_1\cdot f_2)'(x;l) & = f_1(x)f'_2(x;l)+f_2(x)f'_1(x;l) \,,\\
\left(\frac{f_1}{f_2}\right)'(x;l) & = -\frac{f_1(x)f'_2(x;l)-f_2(x)f'_1(x;l)}{[f_2(x)]^2}
\end{align*}
\end{lemma}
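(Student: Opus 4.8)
The plan is to argue directly from the definition of the one-sided directional derivative as the limit of the difference quotient $\frac{f(x+tl)-f(x)}{t}$ as $t\downarrow 0$, reducing each of the three formulas to elementary limit algebra. The only preliminary observation I need is that directional differentiability at $x$ forces one-sided continuity of $f_1$ and $f_2$ along each ray: since $\frac{f_i(x+tl)-f_i(x)}{t}$ converges to the finite number $f_i'(x;l)$, the numerator $f_i(x+tl)-f_i(x)$ equals $t$ times a bounded quantity and hence tends to $0$, so $\lim_{t\downarrow 0} f_i(x+tl)=f_i(x)$ for $i=1,2$. This fact supplies all the continuity used below, and existence of each combined directional derivative will follow automatically once the algebraic identities are established.

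The linear combination rule is immediate, since the difference quotient of $\alpha f_1+\beta f_2$ splits as $\alpha\frac{f_1(x+tl)-f_1(x)}{t}+\beta\frac{f_2(x+tl)-f_2(x)}{t}$ and the limit of a sum is the sum of limits. For the product rule I would insert the standard cross term, writing
$$
\frac{f_1(x+tl)f_2(x+tl)-f_1(x)f_2(x)}{t} = f_1(x+tl)\,\frac{f_2(x+tl)-f_2(x)}{t} + f_2(x)\,\frac{f_1(x+tl)-f_1(x)}{t}.
$$
Passing to the limit $t\downarrow 0$ and using $f_1(x+tl)\to f_1(x)$ from the continuity observation yields $f_1(x)f_2'(x;l)+f_2(x)f_1'(x;l)$, as claimed.

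For the quotient rule I first note that, since $f_2(x)\ne 0$ and $f_2(x+tl)\to f_2(x)$, the denominator $f_2(x+tl)$ is nonzero for all sufficiently small $t>0$, so the quotient is well defined along the ray near $t=0$; this well-definedness is the one point requiring a moment's care, and I would flag it as the main (if mild) obstacle, resting entirely on the one-sided continuity of $f_2$ extracted above. I would then combine the two fractions over the common denominator $f_2(x+tl)f_2(x)$ and add and subtract $f_1(x)f_2(x)$ in the numerator to obtain
$$
\frac{1}{t}\Bigl(\frac{f_1(x+tl)}{f_2(x+tl)}-\frac{f_1(x)}{f_2(x)}\Bigr) = \frac{f_2(x)\,\frac{f_1(x+tl)-f_1(x)}{t}-f_1(x)\,\frac{f_2(x+tl)-f_2(x)}{t}}{f_2(x+tl)\,f_2(x)}.
$$
Letting $t\downarrow 0$, the denominator tends to $[f_2(x)]^2$ and the numerator to $f_2(x)f_1'(x;l)-f_1(x)f_2'(x;l)$, which is exactly $-\frac{f_1(x)f_2'(x;l)-f_2(x)f_1'(x;l)}{[f_2(x)]^2}$. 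This completes all three cases.
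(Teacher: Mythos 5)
Your proof is correct, but note that the paper itself contains no proof of this lemma to compare against: the result is quoted as known, with a citation to Proposition~3.1 of Section~I.3 in \cite{DemRub1995}, and the authors use it as a black box in their calculus rules. Your argument is the standard elementary one that such a reference would contain: splitting the difference quotient for the linear combination, inserting the cross term $f_1(x+tl)\,\frac{f_2(x+tl)-f_2(x)}{t}+f_2(x)\,\frac{f_1(x+tl)-f_1(x)}{t}$ for the product, and combining over the common denominator $f_2(x+tl)f_2(x)$ for the quotient. You also correctly isolated and discharged the only two delicate points, namely that finiteness of $f_i'(x;l)$ forces the one-sided continuity $\lim_{t\downarrow 0}f_i(x+tl)=f_i(x)$ along each ray, and that this in turn keeps $f_2(x+tl)$ nonzero for small $t>0$, so the quotient is well defined near $t=0$; this is exactly where a naive proof could be faulted, since directional differentiability does not give full continuity of $f_i$ at $x$, only continuity along rays, and continuity along the ray in direction $l$ is all your limits require. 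Finally, your resulting expression $\frac{f_2(x)f_1'(x;l)-f_1(x)f_2'(x;l)}{[f_2(x)]^2}$ agrees with the paper's stated form $-\frac{f_1(x)f_2'(x;l)-f_2(x)f_1'(x;l)}{[f_2(x)]^2}$, so all three identities are established.
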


We recall the definitions of algebraic operations on directed sets first. Observe that these rules are designed to be consistent with the corresponding operations on convex sets and pairs of convex compacts.

Linear operations on directed intervals are defined in a natural way and are motivated by the operations for vectors in $\R^2$: for directed intervals $\overrightarrow{[a,b]}$ and $\overrightarrow{[c,d]}$ and any $\a,\b\in \R$
\begin{equation}\label{eq:LinOpDirInt}
\alpha \overrightarrow{[a,b]}+\beta \overrightarrow{[c,d]} := \overrightarrow{[\alpha a+\beta c, \alpha b+\beta d]} \;.
\end{equation}

The \emph{linear operations} are defined recursively on the two components of the
directed sets $\direct{A} = (\direct{A_{n-1}(l)}, a_n(l))_{l \in \Sn}$,
$\direct{B} = (\direct{B_{n-1}(l)}, b_n(l))_{l \in \Sn}$:
      \begin{equation}
       \begin{array}{|r@{\,}c@{\,}l|}
                  \hline \rule{0pt}{3ex}
          \direct{A} + \direct{B} & := & (\direct{A_{n-1}(l)} +
             \direct{B_{n-1}(l)}, a_n(l) + b_n(l))_{l \in \Sn} \;, \\[0.5ex]
          \lambda \cdot \direct{A} & := & (\lambda \cdot
             \direct{A_{n-1}(l)}, \lambda \cdot a_n(l))_{l \in \Sn}
             \hfre (\lambda \in \R) \;, \\[0.5ex]
          \direct{A} - \direct{B} & := & \direct{A} + (-\direct{B})
             = (\direct{A_{n-1}(l)} - \direct{B_{n-1}(l)},
                a_n(l) - b_n(l))_{l \in \Sn} \;. \\[0.5ex]
          \hline
       \end{array}
       \label{Eq_Operat_Dir_sets}
      \end{equation}

We start with the simplest algebraic operation, the sum of two directed subdifferentiable functions which results in the exact sum rule for the directed subdifferential.

\begin{prop}[\bf directed subdifferential of a linear combination]\label{prop:DDSum}
\mbox{} \\
Let $f_1, f_2:\R^n\to \R$ be directed subdifferentiable at $x\in \R^n$ and $\alpha,\beta\in\R$. 

Then $f= \alpha f_1+\beta f_2$ is also directed subdifferentiable and
\begin{equation}\label{eq:SumRule}
\dd f(x) = \alpha\dd f_1(x)+\beta\dd f_2(x) \,.
\end{equation}
\end{prop}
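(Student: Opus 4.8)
The plan is to argue by induction on the dimension $n$, proving simultaneously that $f=\alpha f_1+\beta f_2$ is directed subdifferentiable at $x$ and that the sum rule~\eqref{eq:SumRule} holds. The whole argument rests on two ingredients: the pointwise linearity of the directional derivative from Lemma~\ref{lem:DirDer}, namely $f'(x;l)=\alpha f_1'(x;l)+\beta f_2'(x;l)$ for every $l$, and the recursive, componentwise definition of the linear operations on directed sets given in~\eqref{eq:LinOpDirInt} and~\eqref{Eq_Operat_Dir_sets}.

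For the base case $n=1$, I would use Definition~\ref{def:ddd}(i), so that $\dd f(x)=(f'(x;l))_{l=\pm1}$. By Lemma~\ref{lem:DirDer} the two entries equal $\alpha f_1'(x;l)+\beta f_2'(x;l)$, and since the linear operation on directed intervals~\eqref{eq:LinOpDirInt} acts componentwise on the representing pair $(a_1(-1),a_1(1))$, this pair is precisely $\alpha\dd f_1(x)+\beta\dd f_2(x)$. Directed subdifferentiability of $f$ is automatic in dimension one, as noted after Definition~\ref{def:dsf}.

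For the inductive step ($n\geq2$) I assume the full claim in dimension $n-1$. The key observation is that the restriction of the linear combination equals the linear combination of the restrictions: evaluating the linearity of $f'(x;\cdot)$ at the direction $l+\Prj^\trans_{n-1,l}(y)$ gives
$$f_l(y)=f'(x;l+\Prj^\trans_{n-1,l}(y))=\alpha (f_1)_l(y)+\beta (f_2)_l(y),$$
so $f_l=\alpha(f_1)_l+\beta(f_2)_l$ as functions on $\R^{n-1}$. From this I would first check that $f$ is directed subdifferentiable via Definition~\ref{def:dsf}: existence of $f'(x;l)$ comes from Lemma~\ref{lem:DirDer}; continuity and boundedness of $l\mapsto f'(x;l)$ follow from the corresponding properties of $f_1'(x;\cdot)$ and $f_2'(x;\cdot)$, with bound $|\alpha|M_1+|\beta|M_2$ if $f_i$ is $M_i$-directed subdifferentiable; and directed subdifferentiability of $f_l$ at $0_{n-1}$ follows from the induction hypothesis applied to the directed subdifferentiable restrictions $(f_1)_l$ and $(f_2)_l$.

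It then remains to verify the sum rule itself. By Definition~\ref{def:ddd}(ii) we have $\dd f(x)=(\dd f_l(0),f'(x;l))_{l\in\Sn}$. The real-valued component equals $\alpha f_1'(x;l)+\beta f_2'(x;l)$ by Lemma~\ref{lem:DirDer}, while the induction hypothesis applied to $f_l=\alpha(f_1)_l+\beta(f_2)_l$ yields $\dd f_l(0)=\alpha\dd(f_1)_l(0)+\beta\dd(f_2)_l(0)$ for the directed-set component. Matching these two components term by term against the recursive formula~\eqref{Eq_Operat_Dir_sets} for $\alpha\dd f_1(x)+\beta\dd f_2(x)$ closes the induction. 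I expect the only (mild) obstacle to be the bookkeeping in the identity $f_l=\alpha(f_1)_l+\beta(f_2)_l$ and in confirming that~\eqref{Eq_Operat_Dir_sets} is matched correctly for arbitrary real scalars, including negative $\alpha$ or $\beta$, which the recursive definition of the operations handles uniformly.
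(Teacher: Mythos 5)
Your proposal is correct, and the core of the argument --- induction on the dimension $n$, with Lemma~\ref{lem:DirDer} giving $f'(x;l)=\alpha f_1'(x;l)+\beta f_2'(x;l)$, hence $f_l=\alpha(f_1)_l+\beta(f_2)_l$, and the componentwise operations \eqref{eq:LinOpDirInt}, \eqref{Eq_Operat_Dir_sets} closing the induction --- is exactly the paper's proof of \eqref{eq:SumRule}. Where you genuinely diverge is in how directed subdifferentiability of $f$ is established: the paper simply cites Lemma~5.2 of \cite{BaiFarRosh2014a} (both for $f$ at the top level and for the restrictions $f_l$ inside the induction), whereas you fold this into a strengthened simultaneous induction, verifying Definition~\ref{def:dsf} directly (existence, continuity and boundedness of $f'(x;\cdot)$, plus the induction hypothesis applied to $f_l$). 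Your route buys self-containedness and makes the quantitative content explicit --- a linear combination of an $M_1$- and an $M_2$-directed subdifferentiable function is $(|\alpha|M_1+|\beta|M_2)$-directed subdifferentiable --- at the cost of a heavier induction hypothesis: since Definition~\ref{def:dsf} requires the \emph{same} constant $M$ at every recursion level, the bound $|\alpha|M_1+|\beta|M_2$ must itself be part of what you propagate, not just the qualitative statement; you do mention this bound, but in a full write-up you should state the strengthened hypothesis explicitly so the uniformity over $l\in\Sn$ and over recursion depth is airtight. The paper's citation keeps its proof shorter, essentially because the cited lemma is precisely this closure property proved once and for all in \cite{BaiFarRosh2014a}.
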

\begin{proof}
%%% From~\cite[Lemma~\xrref{lem:MCalc}]{BaiFarRosh2014a} we know that $f$ is directed subdifferentiable. It remains to show \eqref{eq:SumRule}. 
From~\cite[Lemma~5.2]{BaiFarRosh2014a} we know that $f$ is directed subdifferentiable. It remains to show \eqref{eq:SumRule}. 

We use the induction argument on the space dimension $n$. We first prove the statement~\eqref{eq:SumRule} for $n=1$.
%%% Using~\cite[Definition~\xrref{def:ddd} and Lemma~\xrref{lem:DirDer}]{BaiFarRosh2014a}
Using Definition \ref{def:ddd}, Lemma \ref{lem:DirDer}
and properties of operations on directed intervals,  %~\cite[Definition~5.4 and Remark~3.1]{BaiFarRosh2014a}
we have
\begin{align*}
\dd f(x) & = \overrightarrow{[-f'(x;-1),f'(x;1)]}\\
  & = \overrightarrow{[-(\alpha f'_1(x;-1)+\beta f'_2(x;-1)),\alpha f'_1(x;1)+\beta f'_2(x;1)]}\\
  & = \alpha\overrightarrow{[-f'_1(x;-1),f'_1(x;1)]} +\beta \overrightarrow{[-f'_2(x;-1),f'_2(x;1)]} \\  
  & = \alpha \dd f_1(x)+\beta\dd f_2(x) \,.
\end{align*}

%%% Now assume that \eqref{eq:SumRule} holds for $n=k-1$. We show that it is valid for $n=k$. By~\cite[Lemma~\xrref{lem:DirDer}]{BaiFarRosh2014a} we have for all $l\in \R^k$
Now assume that \eqref{eq:SumRule} holds for $n-1$. We show that it is also valid for $n$. 
By Lemma \ref{lem:DirDer} %~\cite[Remark~3.1]{BaiFarRosh2014a} 
we have for all $l\in \Sn$
\begin{equation*}\label{eq:002}
   f'(x;l) =\alpha f'_1(x;l)+\beta f'_2(x;l) 
   %%% \quad (l \in \Snvar{k}) 
   \,, 
\end{equation*}
hence, for all $y\in \R^{n-1}$
$$
f_l(y) = \alpha (f_1)_l(y)+\beta (f_2)_l(y)
$$
and by the directed subdifferentiability of $(f_i)_l$, $i=1,2$,
%%% and~\cite[Lemma~\xrref{lem:MCalc}]{BaiFarRosh2014a}, we conclude
and~\cite[Lemma~5.2]{BaiFarRosh2014a}, we conclude
that $f_l$ is directed subdifferentiable at $y=0$. 
Therefore, by the induction assumption for all $l\in \Sn$ we get
\begin{equation}\label{eq:003}
\dd f_l(0) = \alpha \dd (f_1)_l(0) + \beta \dd (f_2)_l(0)
\end{equation}
%%% Now from \eqref{eq:002},\eqref{eq:003} and the addition operation on directed sets \xreqref{Eq_Operat_Dir_sets} we have
Now, from Definition \ref{def:ddd}, %\eqref{eq:002},
\eqref{eq:003} and the definition of addition and scalar multiplication on directed sets %~\cite[(2)]{BaiFarRosh2014a} 
we have
\begin{align*}
\dd f(x) & %= \left(\dd f_l, f'(x;l)\right)_{l\in  \Snvar{k}} %\\ & 
= \left(\alpha\dd (f_1)_l + \beta\dd (f_2)_l, \alpha f'_1(x;l)+\beta f'_2(x;l)\right)_{l\in  \Snvar{n}}\\
 & = \alpha\left(\dd (f_1)_l, f'_1(x;l)\right)_{l\in  \Snvar{n}}+\beta \left(\dd (f_2)_l,f'_2(x;l)\right)_{l\in  \Snvar{n}}\\ 
& =\alpha \dd f_1(x)+\beta \dd f_2(x) \,.
\end{align*}
\end{proof}

Observe that the multiplication with negative scalars is possible in the last
proposition. 

%\begin{prop}\label{prop:DDConst}{\bf (Directed subdifferential of a product with a constant)} Let $f :\R^n\to \R$ be directed subdifferentiable 
%at $x\in \R^n$ and $\lambda \in \R$. Then for any $l\in \R^n$ the function $\lambda f$ is again directed subdifferentiable and
%\begin{equation}\label{eq:ScalMultRule}
%\dd (\lambda f)(x) = \lambda \dd f(x) \,.
%\end{equation}
%\end{prop}
%\begin{proof}
%%% The proof uses~\cite[Lemma~\xrref{lem:DirDer} and Lemma~\xrref{lem:MCalc}]{BaiFarRosh2014a}
%The proof uses~\cite[Remark~3.1 and Lemma~5.5]{BaiFarRosh2014a}
%and is otherwise very similar to the
%proof of Proposition~\ref{prop:DDSum}.
%\end{proof}

%The next statement follows directly from Propositions~\ref{prop:DDSum} and \ref{prop:DDConst}.

%\begin{corollary}{\bf (Directed subdifferential of a difference)} Let $f_1, f_2:\R^n\to \R$ be directed subdifferentiable 
%at $x\in \R^n$. Then $f= f_1-f_2$ is again directed subdifferentiable and
%$$
%\dd f(x) = \dd f_1(x) - \dd f_2(x) \,.
%$$
%\end{corollary}

%After having considered sum, subtraction and scalar multiplication we deal
Next we consider the product of directed subdifferentiable functions and derive the Leibniz rule for
the directed subdifferential.

%%% \begin{prop}\label{prop:DDProd}{\bf (directed subdifferential of product and ratio)} \\
\begin{prop}[\bf directed subdifferential of product and ratio]\label{prop:DDProd}
\mbox{} \\
Let $f_1, f_2:\R^n\to \R$ be directed subdifferentiable at $x\in \R^n$. 

Then $f= f_1\cdot f_2$ is also directed subdifferentiable at $x$ and
$$
\dd f(x) = f_1(x) \dd f_2(x) +f_2(x) \dd f_1(x).
$$
Moreover, if $f_2(x)\neq 0$, then $f= f_1/ f_2$ is also directed subdifferentiable and
$$
\dd f(x) = -\frac{1}{[f_2(x)]^2}\left[f_1(x)\dd f_2(x)- f_2(x) \dd f_1(x)\right] \,.
$$ 
\end{prop}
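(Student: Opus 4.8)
The plan is to reduce both formulas to the already-proven linear combination rule (Proposition~\ref{prop:DDSum}). The decisive observation is that $f_1(x)$ and $f_2(x)$ are \emph{fixed real numbers} once the base point $x$ is frozen, so the Leibniz and quotient formulas of Lemma~\ref{lem:DirDer} exhibit the directional derivative of the product (resp.\ quotient) as a \emph{constant-coefficient} linear combination of $f'_1(x;\cdot)$ and $f'_2(x;\cdot)$:
\[
(f_1 f_2)'(x;l) = f_2(x)\,f'_1(x;l) + f_1(x)\,f'_2(x;l) \qquad (l\in\R^n).
\]
Since both Definition~\ref{def:dsf} and Definition~\ref{def:ddd} refer to $f$ only through its directional derivative $f'(x;\cdot)$ and its lower-dimensional restrictions $f_l$, the whole computation takes place at the level of directional derivatives, where the coefficients no longer depend on the direction.

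First I would settle directed subdifferentiability. Set $g := f_2(x)\,f_1 + f_1(x)\,f_2$. This is a genuine linear combination of directed subdifferentiable functions, hence directed subdifferentiable by \cite[Lemma~5.2]{BaiFarRosh2014a}, and by construction $g'(x;l) = (f_1 f_2)'(x;l)$ for all $l$. Because directed subdifferentiability is a property of the directional derivative alone (one compares the recursively defined restrictions $f_l$ and $g_l$, which coincide), $f = f_1 f_2$ inherits directed subdifferentiability from $g$; the same argument via Proposition~\ref{prop:subd_dir_deriv} shows $\dd f(x) = \dd g(x)$.

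It then remains to evaluate $\dd g(x)$, which is exactly the sum rule. For completeness I would record the two cases mirroring the proof of Proposition~\ref{prop:DDSum}: for $n=1$ I substitute the Leibniz values $f'(x;\pm 1)$ into $\dd f(x) = \overrightarrow{[-f'(x;-1),f'(x;1)]}$ and factor using the directed-interval operation~\eqref{eq:LinOpDirInt}; for $n\ge 2$ the Leibniz formula gives $f_l = f_1(x)\,(f_2)_l + f_2(x)\,(f_1)_l$ on $\R^{n-1}$, so Proposition~\ref{prop:DDSum} (applied in dimension $n-1$ at the point $0_{n-1}$) yields $\dd f_l(0) = f_1(x)\,\dd (f_2)_l(0) + f_2(x)\,\dd (f_1)_l(0)$, and assembling the pairs $(\dd f_l(0), f'(x;l))_{l\in\Sn}$ through the recursive linear operations~\eqref{Eq_Operat_Dir_sets} produces $f_1(x)\,\dd f_2(x) + f_2(x)\,\dd f_1(x)$.

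The quotient rule is handled identically. When $f_2(x)\neq 0$ the formula of Lemma~\ref{lem:DirDer} rewrites as $(f_1/f_2)'(x;l) = \tfrac{1}{f_2(x)} f'_1(x;l) - \tfrac{f_1(x)}{[f_2(x)]^2} f'_2(x;l)$, again a constant-coefficient linear combination, so the same two steps give $\dd f(x) = \tfrac{1}{f_2(x)}\dd f_1(x) - \tfrac{f_1(x)}{[f_2(x)]^2}\dd f_2(x)$, which is the asserted expression after collecting the factor $-1/[f_2(x)]^2$. The only genuine subtlety — and the step I would treat most carefully — is the bookkeeping of these constant coefficients (which may be negative) through the recursive operations on directed sets; this is legitimate precisely because Proposition~\ref{prop:DDSum} permits arbitrary real scalars, so no new induction on the product or quotient is actually needed: everything collapses onto the linear combination rule.
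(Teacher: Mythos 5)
Your proof is correct, and its engine is the same as the paper's: once $x$ is frozen, Lemma~\ref{lem:DirDer} exhibits $f'(x;\cdot)$ as a constant-coefficient linear combination of $f_1'(x;\cdot)$ and $f_2'(x;\cdot)$, so everything reduces to Proposition~\ref{prop:DDSum}. The difference is in the packaging. The paper runs an explicit induction on the dimension: the base case is the directed-interval computation, and in the step the restrictions $f_l = f_1(x)(f_2)_l + f_2(x)(f_1)_l$ are treated by Proposition~\ref{prop:DDSum} in dimension $n-1$ and the two components reassembled via \eqref{Eq_Operat_Dir_sets} --- this is literally your ``for completeness'' paragraph. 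Notably, the paper's induction hypothesis (the product rule in dimension $k-1$) is never actually invoked, only the sum rule is, which confirms your observation that no genuine induction on the product is needed. What your version adds is the front-end reduction: introducing $g := f_2(x)\,f_1 + f_1(x)\,f_2$, noting $g'(x;\cdot) = f'(x;\cdot)$, and using the fact that Definitions~\ref{def:dsf} and~\ref{def:ddd} see $f$ only through $f'(x;\cdot)$ (equivalently, Proposition~\ref{prop:subd_dir_deriv}) to conclude $\dd f(x) = \dd g(x)$ in a single application of the sum rule in dimension $n$. This buys a shorter argument and, in addition, settles the directed subdifferentiability of $f=f_1 f_2$ cleanly through the cited Lemma~5.2 of \cite{BaiFarRosh2014a}, a point the paper's proof leaves implicit; the paper's version, in exchange, stays entirely inside the recursive definition and needs no ``equal directional derivatives imply equal directed subdifferentials'' principle. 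Your treatment of the quotient, including the collection of the scalar $-1/[f_2(x)]^2$ using the vector-space structure of $\Dn$, is likewise sound; the paper simply omits it as analogous.
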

\begin{proof}
%%% First, observe that %%% from~\cite[Lemma~\xrref{lem:DirDer}]{BaiFarRosh2014a} we have for all $l\in \R^n$
%%% from Lemma \ref{lem:DirDer} %~\cite[Remark~3.1]{BaiFarRosh2014a} 
%%% we have for all $l\in \R^n$
%%% \begin{equation}\label{eq:004}
%%% f'(x;l) = f_1(x) f'_2(x;l)+f_2(x) f'_1(x;l) \,,
%%% \end{equation}
We will use the calculus rule in~Lemma \ref{lem:DirDer} for the directional derivative of a product
of functions. The proof proceeds by induction with respect to $n$.
For the directed intervals we have
\begin{align*}
\dd f(x) & = \overrightarrow{[-f'(x;-1),f'(x;1)]} \\
    & = \overrightarrow{[-f_1(x)f'_2(x;-1)-f_2(x)f'_1(x;-1),f_1(x)f'_2(x;1)+f_2(x)f'_1(x;1)]}\\
%    & = \overrightarrow{[-f_1(x)f'_2(x;-1),f_1(x)f'_2(x;1)]}+\overrightarrow{[-f_2(x)f'_1(x;-1),f_2(x)f'_1(x;1)]}\\
    & = f_1(x) \overrightarrow{[-f'_2(x;-1),f'_2(x;1)]}+f_2(x)\overrightarrow{[-f'_1(x;-1),f'_1(x;1)]}\\
    & = f_1(x) \dd f_2(x)+f_2(x)\dd f_1(x) \,.
\end{align*}
Assume now that the proposition is true for $n=k-1$ for some $k\geq 2$. For $n=k$, by %%% \eqref{eq:004}
Lemma~\ref{lem:DirDer}
we have for all $l\in \R^n$
\[
f'(x;l) = f_1(x) f'_2(x;l)+f_2(x) f'_1(x;l) \,,
\]
and hence for all $y\in \R^{n-1}$
$$
f_l(y) = f_1(x)(f_2)_l(y)+f_2(x)(f_1)_l(y) \,.
$$
By Proposition~\ref{prop:DDSum} %and \ref{prop:DDConst} 
this yields
\begin{equation}\label{eq:005}
\dd f_l(0) =  f_1(x)\dd (f_2)_l(0)+f_2(x)\dd (f_1)_l(0) \,.
\end{equation}
Therefore, we have
\begin{align*}
   & \dd f(x) = \left(\dd f_l(0), f'(x;l)\right)_{l\in S_{n-1}}\\
   = \mbox{} & \left(f_1(x)\dd (f_2)_l(0)+f_2(x)\dd (f_1)_l(0),f_1(x) f'_2(x;l)+f_2(x) f'_1(x;l)\right)_{l\in S_{n-1}}\\
   = \mbox{} & f_1(x)\dd f_2(x)+f_2(x)\dd f_1(x) \,.
\end{align*}

The proof of the expression for the directed subdifferential of a ratio is analogous and is omitted.
\end{proof}

\subsection{Pointwise maximum and minimum}

To describe the calculus rules for pointwise maximum and minimum, we first ought 
to define the corresponding operations on directed sets. The supremum and infimum operations 
have already been defined for pairs of directed sets in~\cite{BaiFar2001}
and are based on the order in this space, see~\cite[Definitions~3.5 and 4.6]{BaiFar2001},~\cite{BaiFarRosh2012b}. 
Note that the notion of the supremum of two directed sets is consistent with 
the convex hull of the union of two convex sets.

\begin{definition}[\bf supremum and infimum of directed sets]
 \mbox{}
 \label{def:sup_inf} 
\begin{itemize}
 \item[(i)] Let $n=1$, $I =\{1,\dots, p\}$ and 
$\overrightarrow A_i = \overrightarrow{[\alpha_i^-,\alpha_i^+]}$, $i\in I$, 
be directed intervals. Define the \emph{supremum} and \emph{infimum}
     $$
        \sup_{i\in I}\{\overrightarrow A_i\} = \overrightarrow{[\min_{i\in I} \alpha_i^-, \max_{i\in I} \alpha_i^+]} \,;
\qquad
        \inf_{i\in I}\{\overrightarrow A_i\} = \overrightarrow{[\max_{i\in I} \alpha_i^-, \min_{i\in I} \alpha_i^+]} \,.
     $$
   \item[(ii)] For $n\geq 2$, $I =\{1,\dots, p\}$ and  directed sets $\{\overrightarrow A^i_n\}_{i\in I}\subset \Dn$ with
   $$
   \overrightarrow A_n^i = \left(\overrightarrow{A^i_{n-1}(l)}, a_n^i(l)\right)_{l\in \Sn} \,,
   $$
   we define the \emph{supremum} and \emph{infimum}
   $$
   \sup_{i\in I}\{\overrightarrow A_n^i\} = \left(\sup_{i\in I(l)}\{\overrightarrow{A^i_{n-1}(l)}\} ,\max_{i\in I}\{a_n^i(l)\}\right)_{l\in \Sn} \,,
   $$
   $$
   \inf_{i\in I}\{\overrightarrow A_n^i\} = \left(\inf_{i\in J(l)}\{\overrightarrow{A^i_{n-1}(l)}\} ,\min_{i\in I}\{a_n^i(l)\}\right)_{l\in \Sn} \,,
   $$
   where $I(l) = \{i\,|\, a_n^i(l) = \max_{k\in I}  a_n^{k}(l) \}$, $J(l) = \{i\,|\, a_n^i(l) = \min_{k\in I}  a_n^{k}(l) \}$.
\end{itemize}
\end{definition}

We will use a property of the min/max operations on directional derivatives, proved in~\cite[Corollary~3.2 in Sect.~I.3]{DemRub1995}:

\begin{lemma}\label{lem:DirDerMaxMin} Let $f_i:\R^n\to \R$, $i\in I = \{1,2,\dots, p\},$ 
be directionally differentiable at $x\in \R^n$ for all $l\in \Sphn$. 

Then pointwise maximum function ${f_{\max}(\cdot) := \max_{i\in I}f_i(\cdot)}$ 
and the pointwise minimum function $f_{\min}(\cdot) := \min_{i\in I} f_i(\cdot)$ are also directionally differentiable at $x$ with
$$
f_{\max}'(x;l) = \max_{i\in I(x)} f_i'(x;l);\quad f_{\min}'(x;l) = \min_{i\in J(x)} f_i'(x;l) \,,
$$
where $I(x) := \{i\in I\,|\, f_i(x) = f_{\max}(x)\}$, $J(x) := \{i\in I\,|\, f_i(x) = f_{\min}(x)\}$.
\end{lemma}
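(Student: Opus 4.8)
\section*{Proof proposal}

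The plan is to reduce the pointwise maximum, for small positive step sizes, to a maximum taken only over the \emph{active} indices $I(x)$, after which the formula drops out directly from the definition of the directional derivative. First I would record the one-sided first-order expansion furnished by directional differentiability of each $f_i$ along the fixed direction $l$: for every $i\in I$,
\[
   f_i(x+tl) = f_i(x) + t\,f_i'(x;l) + o_i(t) \qquad (t\downarrow 0),
\]
where each remainder satisfies $o_i(t)/t\to 0$. This expansion along the ray $x+tl$ is all that the hypothesis provides, and it is all that is needed, since the difference quotient defining $f_{\max}'(x;l)$ only probes the behaviour of the $f_i$ on that ray.

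The key step is to discard the inactive indices. For $i\notin I(x)$ we have $f_i(x)<f_{\max}(x)$, so, $I$ being finite, the positive gap
\[
   \delta := f_{\max}(x) - \max_{i\notin I(x)} f_i(x) > 0
\]
is available; set also the finite constant $C:=\max_{i\in I}|f_i'(x;l)|$. Fix any $i_0\in I(x)$. From the expansions, for $i\notin I(x)$ the value $f_i(x+tl)$ is at most $f_{\max}(x)-\delta+tC+o_i(t)$, whereas $f_{i_0}(x+tl)\ge f_{\max}(x)-tC+o_{i_0}(t)$, so their difference is at least $\delta-2tC+o_{i_0}(t)-o_i(t)$. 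Hence there is a single threshold $t_0>0$ such that for all $0<t<t_0$ and all $i\notin I(x)$,
\[
   f_i(x+tl) < f_{i_0}(x+tl) \le f_{\max}(x+tl),
\]
which gives $f_{\max}(x+tl)=\max_{i\in I(x)} f_i(x+tl)$ on $(0,t_0)$.

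Using $f_i(x)=f_{\max}(x)$ for $i\in I(x)$, the difference quotient then becomes
\[
   \frac{f_{\max}(x+tl)-f_{\max}(x)}{t}
     = \max_{i\in I(x)} \frac{f_i(x+tl)-f_i(x)}{t}
     = \max_{i\in I(x)} \left( f_i'(x;l) + \frac{o_i(t)}{t} \right).
\]
Since $I(x)$ is finite, the finitely many remainder terms $o_i(t)/t$ tend to zero simultaneously, so letting $t\downarrow 0$ yields both the existence of $f_{\max}'(x;l)$ and the claimed formula $f_{\max}'(x;l)=\max_{i\in I(x)} f_i'(x;l)$. The statement for $f_{\min}$ follows symmetrically, or more economically from $f_{\min}=-\max_{i\in I}(-f_i)$ together with $(-f_i)'(x;l)=-f_i'(x;l)$, noting that the active set for the family $\{-f_i\}$ is precisely $J(x)$.

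I expect the main obstacle to be the careful justification that the inactive indices drop out \emph{uniformly}: one must combine the strict gap $\delta$ at the base point with control of the first-order terms through $C$ and the simultaneous smallness of the finitely many remainders $o_i(t)$ to produce a \emph{single} threshold $t_0$ valid for all $i\notin I(x)$ at once. Everything after that is a routine passage to the limit, made legitimate precisely by the finiteness of the index set.
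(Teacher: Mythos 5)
The paper does not actually prove this lemma: it is imported verbatim as a known result, cited to Demyanov--Rubinov \cite[Corollary~3.2 in Sect.~I.3]{DemRub1995}, so there is no in-paper proof to compare against. Your argument is correct and self-contained, and it is in fact the standard proof of this classical fact: a first-order expansion along the ray, elimination of the inactive indices via the strict gap $\delta$ (uniformly in $t$, using finiteness of $I$), pulling the constant $f_{\max}(x)$ inside the maximum, and passing to the limit over finitely many vanishing remainders; the reduction $f_{\min}=-\max_i(-f_i)$ for the minimum is also standard. The only cosmetic point worth noting is the degenerate case $I(x)=I$, where the inactive set is empty and $\delta$ is undefined; there the identity $f_{\max}(x+tl)=\max_{i\in I(x)}f_i(x+tl)$ holds trivially for all $t$, so the argument goes through with that step skipped.
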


%Note that the supremum operator corresponds to the convex hull operation for convex sets 
%(see~\cite[Proposition~4.20]{BaiFar2001}). If $\{C_i\}_{i\in I}$  is a finite collection of convex compact sets in $\R^n$, then 
%\begin{equation*}
%J_n\left(\co_{i\in I} \{C_i\}\right) = \sup_{i\in I} J_n(C_i) \,.
%  & = \left(J_{k-1}\left(\Prj_{k-1,l}\left(Y\left(l,\co_{i\in I}\{C_i\}\right)\right)\right) , \max_{i\in I}\{a_n^{i}(l)\}\right)_{l\in \Sn}\\
%  & = \left(J_{k-1}\left(\Prj_{k-1,l}\left(\co_{i\in I(l)}\{Y(l,C_i)\}\right)\right), \max_{i\in I}\{a_n^{i}(l)\}\right)_{l\in \Sn}\\
%  & = \left(J_{k-1}\left(\co_{i\in I(l)}\{\Prj_{k-1,l}Y(l,C_i)\}\right) , \max_{i\in I}\{a_n^{i}(l)\}\right)_{l\in \Sn}\\
%  & = \left(\sup_{i\in I(l)}\{J_{k-1}(\Prj_{k-1,l}Y(l,C_i))\} , \max_{i\in I}\{a_n^{i}(l)\}\right)_{l\in \Sn}.
%\end{equation*}
%For embedded intervals the infimum of directed sets corresponds to the embedded intersection 
%of the intervals in the 1D case, if the intersection is nonempty. For higher dimensions, 
%the embedded intersection is only a lower bound for the infimum of the embedded convex sets.

\begin{prop}[\bf directed subdifferential of a pointwise maximum] \label{prop:SubdMax}
Let $f_{i}:\R^n\to \R$ be directed subdifferentiable at $x\in \R^n$
for $i \in I=\{1,\ldots,p\}$.  

Then the pointwise maximum function $f:\R^n\to \R$, $f(x) = \max_{i \in I} f_i(x)$ 
is also directed subdifferentiable at $x$ and
\begin{equation}\label{eq:pSubMax01}
\dd f(x) = \sup_{i\in I(x)}\{\dd f_i(x)\} \,,
\end{equation}
where $I(x) = \{ i \in I\,|\, f(x) = f_i(x) \}$ is the set of active indices.
\end{prop}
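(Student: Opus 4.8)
The plan is to argue by induction on the space dimension $n$, exactly as in the proofs of Propositions~\ref{prop:DDSum} and~\ref{prop:DDProd}, comparing the two components of the directed set $\dd f(x)$ with those of $\sup_{i\in I(x)}\{\dd f_i(x)\}$ separately. Throughout, the directional-derivative identity $f'(x;l)=\max_{i\in I(x)}f_i'(x;l)$ from Lemma~\ref{lem:DirDerMaxMin} (valid for all $l$, with the value-active set $I(x)$) is the workhorse, since the directed subdifferential is built entirely from directional derivatives.

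For the base case $n=1$ I would use Definition~\ref{def:ddd}(i) to write $\dd f(x)=\overrightarrow{[-f'(x;-1),f'(x;1)]}$ and substitute $f'(x;\pm 1)=\max_{i\in I(x)}f_i'(x;\pm 1)$. On the other side, Definition~\ref{def:sup_inf}(i) gives $\sup_{i\in I(x)}\{\dd f_i(x)\}=\overrightarrow{[\min_{i\in I(x)}(-f_i'(x;-1)),\max_{i\in I(x)}f_i'(x;1)]}$, and the left endpoint matches after using $\min_{i}(-f_i'(x;-1))=-\max_i f_i'(x;-1)=-f'(x;-1)$. This is a short direct computation.

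For the inductive step I would assume the formula in dimension $n-1$ and use Definition~\ref{def:ddd}(ii), $\dd f(x)=(\dd f_l(0),f'(x;l))_{l\in\Sn}$, together with Definition~\ref{def:sup_inf}(ii) for the right-hand side. The real-valued (generalized support function) components agree at once by Lemma~\ref{lem:DirDerMaxMin}. For the lower-dimensional components, the key observation is that applying Lemma~\ref{lem:DirDerMaxMin} at every point $l+\Prj^\trans_{n-1,l}(y)$ of the affine hyperplane $l+\myspan\{l\}^\perp$ yields $f_l(\cdot)=\max_{i\in I(x)}(f_i)_l(\cdot)$ as functions on $\R^{n-1}$. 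Since $(f_i)_l(0)=f_i'(x;l)$, the index set active for this restricted maximum at $0_{n-1}$ is $\{i\in I(x)\mid f_i'(x;l)=\max_{k\in I(x)}f_k'(x;l)\}$, which is exactly the set $I(l)$ appearing in Definition~\ref{def:sup_inf}(ii) for the family $\{\dd f_i(x)\}$. The induction hypothesis applied to $f_l=\max_{i\in I(x)}(f_i)_l$ then gives $\dd f_l(0)=\sup_{i\in I(l)}\{\dd(f_i)_l(0)\}$, which is precisely the lower-dimensional component of $\sup_{i\in I(x)}\{\dd f_i(x)\}$, closing the induction.

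The directed subdifferentiability of $f$ (existence, continuity and boundedness of $f'(x;\cdot)$, and recursive directed subdifferentiability of $f_l$) follows from the corresponding closure property of the class under pointwise maximum in~\cite{BaiFarRosh2014a}, or may be obtained alongside the formula by the same induction. I expect the main obstacle to be the bookkeeping of the active index sets: one must carefully distinguish the value-active set $I(x)=\{i\mid f_i(x)=f(x)\}$, over which the supremum is formed, from the direction-dependent sets $I(l)$ selected inside Definition~\ref{def:sup_inf}(ii), and verify that the latter coincide with the active sets of the restricted maxima $f_l$ at $0_{n-1}$. The reconciliation works because the support-function component of $\dd f_i(x)$ in direction $l$ equals $f_i'(x;l)=(f_i)_l(0)$, so ``active in direction $l$'' for the directed-set supremum means exactly ``active at the base point $0_{n-1}$'' for the restricted function -- this is the step that makes the recursion close, and for the minimum an entirely analogous argument with $I(x)$ replaced by $J(x)$ and $\sup$ by $\inf$ would apply.
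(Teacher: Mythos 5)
Your proposal is correct and follows essentially the same route as the paper's proof: induction on the dimension, Lemma~\ref{lem:DirDerMaxMin} for the directional derivatives, componentwise comparison via Definitions~\ref{def:ddd} and~\ref{def:sup_inf}, and the citation of~\cite{BaiFarRosh2014a} for the directed subdifferentiability of the max. If anything, your bookkeeping of the active sets is more explicit than the paper's, which writes the induction step loosely as $\dd f_l(0)=\sup_{i\in I(x)}\dd (f_i)_l(0)$ where your identification of the direction-dependent set $I(l)$ with the active set of $f_l$ at $0_{n-1}$ is the precise statement needed.
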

\begin{proof}
%%% Observe that $f$ is directed subdifferentiable by~\cite[Lemma~\xrref{lem:MCalcMaxMin}]{BaiFarRosh2014a}, hence, it remains to show \eqref{eq:pSubMax01}. 
Observe that $f$ is directed subdifferentiable by~\cite[Lemma~5.3]{BaiFarRosh2014a}, 
hence, it remains to show \eqref{eq:pSubMax01}. 
%%% As usual, we use the induction argument. Let $f_1,\dots, f_p:\R\to \R$ be directed subdifferentiable at $x\in \R$, and denote $\dd f_i = \overrightarrow{[d_i^-,d_i^+]}$. Then by~\cite[Lemma~\xrref{lem:DirDerMaxMin}]{BaiFarRosh2014a} $f$ is directionally differentiable with a finite directional derivative
As usual, we use the induction argument. \\
For $n=1$: 
Let $f_1,\dots, f_p:\R^1\to \R^1$ be directed subdifferentiable 
at $x\in \R$, and denote $\dd f_i(x) = \overrightarrow{[d_i^-,d_i^+]}$. Then by %~\cite[Remark~3.2]{BaiFarRosh2014a} 
Lemma \ref{lem:DirDerMaxMin} $f$ is directionally differentiable with a finite directional derivative
$$
f'(x;l) = \max_{i\in I(x)}\{f'(x;l)\} \quad \text{for } l\in \{-1,1\} \,,
$$
where $I(x) = \{ i \in I \,|\, f(x) = f_i(x)\}$. By Definitions \ref{def:ddd} and \ref{def:sup_inf}, we can write
\begin{align*}
\dd f(x) & = \overrightarrow{[-f'(x;-1),f'(x;1)]}\\
& =  \overrightarrow{\left[-\max_{i\in I(x)}\{f'(x;-1)\},\max_{i\in I(x)}\{f'(x;1)\}\right]}
%& =  \overrightarrow{\left[\min_{i\in I(x)}\{-f'(x;-1)\},\max_{i\in I(x)}\{f'(x;1)\}\right]}\\
%& =  \overrightarrow{\left[\min_{i\in I(x)}\{d_i^-\},\max_{i\in I(x)}\{d_i^+\}\right]}\\
  =  \sup_{i\in I(x)}\{\dd f_i(x)\} \,.
\end{align*}
We thus have our induction base for $n=1$.

Now assume that the result is true for $n - 1$, we show that it is also true for $n$.
Notice that for given $y \in \R^{n-1}$
$$
f_l(y) = f'(x;l+\Prj^\trans_{n-1,l}y)= \max_{i\in I(x)}f_i'(x;l+\Prj^\trans_{n-1,l}y)= \max_{i\in I(x)}(f_i)_l(y) \,.
$$
Since for any $i=1,...,p$, the function $(f_i)_l$ is directed subdifferentiable by Definition \ref{def:dsf}, and the relevant max-function is directed subdifferentiable by~\cite[Lemma~5.3]{BaiFarRosh2014a} and our induction assumption $\dd f_l(0) = \sup_{i\in I(x)} \dd
(f_i)_l(0)$. Therefore, %$f$ is directed subdifferentiable and
$$
\dd f(x) = (\dd f_l(0), f'(x;l))_{l\in \Snvar{n}} = (\sup_{i\in I(x)} \dd
(f_i)_l(0), \max_{i\in I(x)}f_i'(x;l))_{l\in \Snvar{n}} \,.
$$
Since $\dd f_i(x) = (\dd (f_i)_l(0), f_i'(x;l))_{l\in \Snvar{n}}$, this yields
$$
\dd f (x) = \sup_{i\in I(x)}\dd f_i(x) \,.
$$
\end{proof}
Note that in general only inclusion results for other subdifferentials are available  for 
pointwise maximum or minimum.
See~\cite{Rosh2010} for a discussion of this issue and an improved inclusion rule
for min-functions and the Mordukhovich subdifferential.

The following statement is proved similarly to the previous one.
\begin{prop}[\bf directed subdifferential of a pointwise minimum]\label{prop:SubdMin}
Let $f_{i}:\R^n\to \R$ be directed subdifferentiable at $x\in \R^n$
for $i \in I=\{1,\ldots,p\}$.  

Then the pointwise minimum function $f:\R^n\to \R$, $f(x) = \min_{i\in I} f_i(x)$ is also directed subdifferentiable at $x$. Moreover,
$$
\dd f(x) = \inf_{i\in I(x)}\{\dd f_i(x)\} \,,
$$
where $I(x) = \{ i \in I \,|\, f(x) = f_i(x) \}$ is the set of active indices.
\end{prop}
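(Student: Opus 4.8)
The plan is to mirror the proof of Proposition~\ref{prop:SubdMax} for the pointwise maximum, replacing suprema by infima throughout and using the elementary identity $\min_i a_i = -\max_i(-a_i)$ to pass between the two. First I would note that directed subdifferentiability of $f=\min_{i\in I}f_i$ follows exactly as in the maximum case (via~\cite[Lemma~5.3]{BaiFarRosh2014a} applied to $-f=\max_{i\in I}(-f_i)$ together with the negation part of Proposition~\ref{prop:DDSum}), so that $\dd f(x)$ is well defined and only the formula for $\dd f(x)$ remains to be established. The argument then proceeds by induction on the state dimension $n$.

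For the base case $n=1$, Lemma~\ref{lem:DirDerMaxMin} gives $f'(x;l)=\min_{i\in I(x)}f_i'(x;l)$ for $l\in\{-1,1\}$, where $I(x)$ is the active index set. Writing $\dd f_i(x)=\overrightarrow{[-f_i'(x;-1),f_i'(x;1)]}$ and using $-\min_{i}f_i'(x;-1)=\max_{i}(-f_i'(x;-1))$, the interval $\dd f(x)=\overrightarrow{[-f'(x;-1),f'(x;1)]}$ coincides with $\overrightarrow{[\max_{i\in I(x)}(-f_i'(x;-1)),\min_{i\in I(x)}f_i'(x;1)]}$, which is precisely $\inf_{i\in I(x)}\{\dd f_i(x)\}$ by Definition~\ref{def:sup_inf}(i). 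This establishes the induction base.

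For the inductive step, assuming the formula holds in dimension $n-1$, I would fix $l\in\Sn$ and restrict to $l+\myspan\{l\}^\perp$: by Lemma~\ref{lem:DirDerMaxMin} and positive homogeneity one gets $f_l(y)=f'(x;l+\Prj^\trans_{n-1,l}y)=\min_{i\in I(x)}(f_i)_l(y)$ for all $y\in\R^{n-1}$. Since each $(f_i)_l$ is directed subdifferentiable by Definition~\ref{def:dsf}, the induction hypothesis applied to this minimum yields $\dd f_l(0)=\inf_{i\in \tilde J(l)}\dd (f_i)_l(0)$, where $\tilde J(l)$ collects the indices $i\in I(x)$ active for the minimum at $y=0$. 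Assembling $\dd f(x)=(\dd f_l(0),f'(x;l))_{l\in\Sn}$ via Definition~\ref{def:ddd}(ii) and comparing with the recursive infimum of Definition~\ref{def:sup_inf}(ii) then gives $\dd f(x)=\inf_{i\in I(x)}\{\dd f_i(x)\}$.

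The step I expect to require the most care is the bookkeeping of the nested active index sets. Concretely, the induction hypothesis produces an infimum over $\tilde J(l)$, the indices active for the minimum of the \emph{restricted} functions $(f_i)_l$ at the origin, and I must verify that this set coincides with the inner active set $J(l)=\{i\in I(x)\,|\,a_n^i(l)=\min_{k}a_n^k(l)\}$ appearing in Definition~\ref{def:sup_inf}(ii). This identification rests on the observation that $(f_i)_l(0)=f_i'(x;l)$ is exactly the generalized support-function component $a_n^i(l)$ of $\dd f_i(x)$ at direction $l$, so that ``active for the minimum at $0$'' and ``minimizing the support-function component'' describe the same index set; once this is in place the two expressions agree directionwise and the induction closes.
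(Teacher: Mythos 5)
Your proof is correct and takes essentially the same approach as the paper, which disposes of Proposition~\ref{prop:SubdMin} by stating that it ``is proved similarly to the previous one'', i.e.\ by mirroring the induction for Proposition~\ref{prop:SubdMax} with $\inf$/$\min$ in place of $\sup$/$\max$, exactly as you do. Your explicit identification of the active set $\tilde J(l)$ of the restricted functions $(f_i)_l$ at $0$ with the set $J(l)$ from Definition~\ref{def:sup_inf}, via $(f_i)_l(0)=f_i'(x;l)=a_n^i(l)$, is precisely the bookkeeping that the paper's maximum proof leaves implicit, so it is a welcome refinement rather than a different route.
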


\section{Optimality Conditions}
\label{secOptCond}

Recall that the classical nonsmooth optimality conditions for a nonsmooth functions can be stated in terms of the directional derivative: if the function attains a local minimum at $x$, then $f'(x;l)\geq 0$ for all directions $l \in \Sn$.
%; moreover, the strict condition $f'(x;g)>0$ for all $g\in \Sn$ is sufficient for a local minimum. 
For a convex function this condition translates into the inclusion $0\in \partial f(x)$, 
% and $0\in \inter \partial f(x)$ respectively, 
where $\partial f(x)$ is the Moreau-Rockafellar subdifferential of $f$. 
For differentiable convex  functions this subdifferential is a singleton and the 
latter condition is just the Fermat rule $f'(x)=\nabla f(x)^\trans=0$.
Here we show  that the optimality conditions for directed subdifferentiable
functions generalize both the Fermat rule for differentiable functions and the 
above inclusion for convex functions. In the space of directed sets, we replace the
relations of set inclusion by inequalities expressing the partial order in this space.
Similarly, the necessary conditions for QD functions in~\cite[(28)]{DEM/JEY:1997} 
formulated with the Michel-Penot subdifferential have a close link with the 
visualization of the directed subdifferential.

\begin{remark} \label{rem:diff_fct}

   Let $f: \R^n \rightarrow \R$ be Fr\'{e}chet differentiable. 
   Then, $f$ is directed subdifferentiable with
    \begin{align*}
       \dd f(x) & = \Embed{n}{ \{ \nabla f(x)^\trans \} }  = \Embed{n}{ \{  f'(x) \} }\,.
    \end{align*}
   %
%
%\begin{proof}
%
   %%% Indeed, by 
   By the assumption and \cite[Subsec.~III.2.1, 1.]{DemRub1995}, $f$ is quasidifferentiable with
    \begin{align*}
       f'(x;l) & = \nabla f(x)^\trans l = \supf{l}{ \{\nabla f(x)^\trans\} } - \supf{l}{ \oldminus \{0\} } \,, %\\
       %\QuSubD{f}{x} & = [ \{\nabla f(x)^\trans\},\{0\} ] \,.
    \end{align*}
   and the quasidifferential is given by the ordered pair of two singleton convex sets,
    %\begin{align*}
      % f'(x;l) & = \nabla f(x)^\trans l = \supf{l}{ \{\nabla f(x)^\trans\} } - \supf{l}{ \oldminus \{0\} } \,, \\
       $\QuSubD{f}{x} %& 
			           = [ \{\nabla f(x)^\trans\},\{0\} ] \,$.
   % \end{align*}
%
   Since quasidifferentiable functions are a special case of directed subdifferentiable ones, we can apply the results in
   \cite[Subsec.~4.2]{BaiFarRosh2014a} so that
    \begin{align*}
       \dd f(x) & = \dd_{QD} f(x) = \Embed{n}{ \{\nabla f(x)^\trans\} } \,.
    \end{align*}
   %
%\end{proof}
\end{remark}

For a directed subdifferentiable function, we can state optimality conditions in terms of the order relations between the embedded zero $\direct 0$ and the directed subdifferential, which can formally be written as $\direct{0}\leq \dd f(x)$. To make this expression precise we first recall the definitions of the order relation on directed sets, and of the directed zero.

The partial order on the space of directed sets can be introduced using the notion of supremum discussed in the previous section:
$$
\direct A \leq \direct B \quad \Leftrightarrow \quad \direct B = \sup\{ \direct A,\direct B \},
$$
which can be made explicit as follows (see \cite[Definition~4.6]{BaiFar2001}).

\begin{definition} For two $n$-dimensional directed sets $\direct A$ and $\direct{B}$ we 
define the \emph{partial order} and write $\direct{A}\leq \direct{B}$ if  
\label{def:order}
\begin{itemize}
\item[(i)] for every $l\in \Sn$ one has $a_n(l) \leq b_n(l)$;
\item[(ii)] whenever $n\geq 2$, and $a_n(l) = b_n(l)$, one has $\direct{A_{n-1}(l)}\leq \direct{B_{n-1}(l)}$.
\end{itemize}
\end{definition}

We can define a directed zero as a trivial directed set with all components being recursively zero. This set corresponds to the embedding of the singleton $\{0_n\}$ in the space of directed sets.

\begin{definition}[\bf directed zero] For $n=1$, the \emph{zero directed interval} \emph{is} 
$$
\direct 0_1 = \overrightarrow{[0,0]};
$$ 
when $n>1$, we define the \emph{directed zero} as 
$$
\direct 0_n = (\direct 0_{n-1}, 0)_{l\in \Sn}.
$$
\end{definition}

Whenever the dimension is clear from the context, we denote the directed zero by $\direct 0$.
Clearly, this is the neutral element for the addition in $\Dn$.

The following technical result is helpful in proving the optimality conditions in terms of directed sets.

\begin{lemma}\label{lem:tech001} Let $f:\R^n \to \R$ be a directed subdifferentiable function. \\
If $f$ attains a local minimum at $x$, then whenever $f'(x;l)=0$ for some $l\in \Sn$, the function  $f_l(\cdot) = f'(x;l + \Prj^\trans_{n-1,l}(\cdot))$ attains a global minimum at $0$.
\end{lemma}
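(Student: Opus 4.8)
The plan is to reduce the claim to the classical first-order necessary optimality condition recalled at the start of this section: since $f$ attains a local minimum at $x$, we have $f'(x;d)\geq 0$ for every direction $d\in\Sn$. The whole argument is short once this is combined with positive homogeneity of the directional derivative.

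First I would record the value at the candidate minimizer. By the definition of $f_l$ and the hypothesis $f'(x;l)=0$, one has $f_l(0) = f'(x;l+\Prj^\trans_{n-1,l}(0)) = f'(x;l) = 0$. Hence it suffices to prove that $f_l(y)\geq 0$ for every $y\in\R^{n-1}$, i.e.\ that $0$ is a global minimizer of $f_l$ with minimal value $0$.

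Next, I fix $y\in\R^{n-1}$ and set $d:=l+\Prj^\trans_{n-1,l}(y)$. The key geometric observation is that $\Prj^\trans_{n-1,l}(y)$ lies in the orthogonal complement of $l$ and is therefore orthogonal to $l$; since $\|l\|=1$, Pythagoras gives $\|d\|^2 = 1+\|\Prj^\trans_{n-1,l}(y)\|^2\geq 1$, and in particular $d\neq 0$. This allows me to normalize $d$ and use the positive homogeneity of the Dini directional derivative (as already exploited in the proof of Proposition~\ref{prop:subd_dir_deriv}), namely $f'(x;d)=\|d\|\,f'(x;d/\|d\|)$ with $d/\|d\|\in\Sn$.

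Finally, the necessary optimality condition yields $f'(x;d/\|d\|)\geq 0$, so that $f_l(y)=f'(x;d)=\|d\|\,f'(x;d/\|d\|)\geq 0 = f_l(0)$. Since $y\in\R^{n-1}$ was arbitrary, $f_l$ attains a global minimum at $0$. There is no serious obstacle in this argument; the only point to handle carefully is that the classical condition is stated for unit directions while $d$ need not be a unit vector, and this gap is closed by positive homogeneity once $d\neq 0$ has been verified.
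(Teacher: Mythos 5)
Your proof is correct and follows essentially the same route as the paper: both arguments reduce the claim to the fact that $f'(x;d)\geq 0$ in every direction $d$ at a local minimum, the paper phrasing this as a contradiction while you argue directly. Your explicit handling of the normalization via positive homogeneity (since $d=l+\Prj^\trans_{n-1,l}(y)$ is not a unit vector) is a welcome detail that the paper's proof glosses over.
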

\begin{proof} Assume the contrary, i.e.~$f'(x;l)=0$ and that there exists $y\in \R^{n-1}$ such that $f_l(y)<f_l(0)$. Hence,
$$
f'(x;l+\Prj^\trans_{n-1,l}(y))<f'(x;l) = 0
$$
follows which is impossible, since $x$ is a local minimum, and the directional derivative $f'(x;l)$ is must be non-negative for all values of $l$.
\end{proof}

%\chR{Proposition instead of Lemma!!!}
\begin{prop}[\bf necessary optimality condition]
\label{prop:nec_opt_cond}%
\label{prop:ineq} Let $f:\R^n\to \R$ be directed subdifferentiable. If $\widehat{x}$ is a local minumum of $f$, then
\begin{equation}\label{eq:DirectedMinimumNecessary}
     \direct{0}  \leq \dd f(\widehat{x}) \,,
\end{equation}
  whereas if $\widehat x$ is a local maximum, then
  \[
     \direct{0}  \leq -\dd f(\widehat{x}).
  \]
\end{prop}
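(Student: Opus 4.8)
The plan is to prove the minimum case by induction on the dimension $n$, matching the recursive structure of both the directed subdifferential (Definition \ref{def:ddd}) and the partial order (Definition \ref{def:order}), and then to obtain the maximum case by applying the minimum result to $-f$. First I would unwind the target inequality through Definition \ref{def:order}. Writing $\dd f(\widehat x) = (\dd f_l(0), f'(\widehat x;l))_{l\in \Sn}$ and $\direct 0_n = (\direct 0_{n-1}, 0)_{l\in \Sn}$, the claim $\direct 0_n \leq \dd f(\widehat x)$ splits into two conditions: (i) $0 \leq f'(\widehat x;l)$ for every $l\in \Sn$, and (ii) for every $l$ with $f'(\widehat x;l)=0$, the lower-dimensional inequality $\direct 0_{n-1} \leq \dd f_l(0)$ holds. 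Condition (i) is exactly the classical first-order necessary condition recalled at the start of this section: since $\widehat x$ is a local minimum, $f'(\widehat x;l)\geq 0$ in every direction.

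For the base case $n=1$, condition (i) alone suffices, because $\dd f(\widehat x) = (f'(\widehat x;l))_{l=\pm 1}$ and $\direct 0_1 = \overrightarrow{[0,0]}$, and both one-sided derivatives are nonnegative. For the inductive step I assume the statement in dimension $n-1$ and verify condition (ii). Here Lemma \ref{lem:tech001} is the crucial bridge: for any $l$ with $f'(\widehat x;l)=0$ it guarantees that the restricted directional derivative $f_l(\cdot) = f'(\widehat x; l+\Prj^\trans_{n-1,l}(\cdot))$, which is itself directed subdifferentiable by Definition \ref{def:dsf}, attains a \emph{global} minimum at $0\in\R^{n-1}$. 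Applying the induction hypothesis to $f_l$ then yields $\direct 0_{n-1} \leq \dd f_l(0)$, which is precisely condition (ii). Combining (i) and (ii) gives $\direct 0_n \leq \dd f(\widehat x)$.

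Finally, the maximum case follows at once: if $\widehat x$ is a local maximum of $f$, it is a local minimum of $-f$, so the minimum result gives $\direct 0 \leq \dd(-f)(\widehat x)$; by the scalar-multiplication rule of Proposition \ref{prop:DDSum} (with $\alpha=-1$, $\beta=0$) we have $\dd(-f)(\widehat x) = -\dd f(\widehat x)$, whence $\direct 0 \leq -\dd f(\widehat x)$.

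The main structural subtlety, and the place where I expect the argument to hinge, is that Lemma \ref{lem:tech001} must deliver a \emph{global} minimum of $f_l$ at $0$ rather than merely a local one: this is exactly the hypothesis needed to invoke the induction hypothesis in dimension $n-1$, since the order relation at the directions where the support-function components coincide is controlled by the lower-dimensional directed subdifferentials. Everything else is a routine unwinding of the recursive definitions of $\dd f$, of $\direct 0_n$, and of the partial order.
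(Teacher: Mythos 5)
Your proposal is correct and follows essentially the same route as the paper's proof: induction on the dimension, the classical first-order condition $f'(\widehat x;l)\geq 0$ for the scalar components, Lemma \ref{lem:tech001} to pass to the restricted functions $f_l$ at the degenerate directions where $f'(\widehat x;l)=0$, and the symmetric argument via $-f$ together with the linearity rule for the maximum case. The only cosmetic difference is that you verify the two conditions of Definition \ref{def:order} directly, whereas the paper verifies the equivalent supremum identity $\dd f(\widehat x)=\sup\{\direct{0},\dd f(\widehat x)\}$ using Definition \ref{def:sup_inf}.
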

\begin{proof} We prove the necessary condition for the local minimum only, as the necessary condition for the local maximum is symmetric: it can be proved by considering the local minima of $-f$, and applying the calculus rules to the directed subdifferential.

By the definition of partial order, \eqref{eq:DirectedMinimumNecessary} is equivalent to showing that if $x$ is a local minimum of $f$, then 
\begin{equation}\label{eq:DirectedMinimumNecessary01}
\dd f(x)  = \sup\{\direct{0},\dd f(x)\}.
\end{equation}

First of all, observe that the statement is true for $n=1$: if a function $f:\R\to \R$ attains its local minimum at $x$, then $f'(x;-1)> 0$, $f'(x;1)>0$, and hence
\begin{align*}
\dd f(x) 
& = \direct{[-f'(x;-1), f'(x;1)]} \\
& = \direct{[\min\{0, -f'(x;-1)\}, \max\{0,f'(x;1)\}]}
 = \sup\{\direct{0},\dd f(x)\}.
\end{align*}

To prove the statement for $n\geq 2$, we proceed by induction. Assume that the statement is true for $n-1$. Let $f$ have a local minimum at $x$, then $f'(x;l)\geq 0$ for all $l$, and hence 
\begin{equation}\label{eq:tech002}
\max\{0, f'(x;l)\} = f'(x;l)\qquad \forall \, l\in \Sn. 
\end{equation}
Whenever $f'(x;l)=0$, $f_l$ attains the minimum at $0_{n-1}$ by Lemma~\ref{lem:tech001}. Hence, by the induction assumption we have 
\begin{equation}\label{eq:tech003}
\sup \{ \direct{0_{n-1}}, \dd f_l(0) \} = \dd f_l(0)\qquad \forall \, l: \, f'(x;l)=0.
\end{equation}
We now have \eqref{eq:DirectedMinimumNecessary01} from \eqref{eq:tech002}--\eqref{eq:tech003} and the definition of supremum.
\end{proof}
In the particular case when $f$ is differentiable, its directed subdifferential
is an embedded singleton containing the gradient and it is easy to see that the inequality
\eqref{eq:DirectedMinimumNecessary} between two embedded singletons
is reduced to equality, which is equivalent to the Fermat rule.

\section{Chain Rule and Mean-Value Theorem}
\label{SecChainMeanVal}

The developments of this section are based on the following result on the
composition of a nonsmooth and a smooth function which is stated in
\cite[Sec.~I.3, Theorem~3.3]{DemRub1995} for a more general outer function 
which is Hadamard differentiable. Here, we only need a subclass of Hadamard
differentiable functions (cf.~\cite[Sec.~I.3, Proposition~3.2]{DemRub1995}).
 
\begin{lemma}\label{lem:DirDerCompos} 
  Let $h:\R^n\to \R^m$ be directionally differentiable at $x \in \R^n$ 
  and $g: \R^m \to \R$ be directionally differentiable and Lipschitz continuous 
  at $h(x)$. Then the composition $f = g \circ h$ is directionally differentiable at $x$
  with
   \begin{align*}
      f'(x;l) & = g'(h(x); h'(x; l)) \quad (l \in \Sn) \,.
   \end{align*}
\end{lemma}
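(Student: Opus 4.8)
The plan is to prove the chain rule formula $f'(x;l) = g'(h(x); h'(x;l))$ by working directly with the definition of the directional derivative as a limit of difference quotients, and exploiting the Lipschitz continuity of $g$ to control an error term. First I would write out the difference quotient for $f = g \circ h$ at $x$ in direction $l$, namely
\[
\frac{f(x+tl) - f(x)}{t} = \frac{g(h(x+tl)) - g(h(x))}{t},
\]
and the goal is to show this tends to $g'(h(x); h'(x;l))$ as $t \downarrow 0$. The natural approach is to replace $h(x+tl)$ by its first-order directional expansion $h(x) + t\,h'(x;l) + o(t)$, which is available because $h$ is directionally differentiable at $x$. I would set $w = h'(x;l) \in \R^m$ and write $h(x+tl) = h(x) + tw + r(t)$ where $r(t)/t \to 0$ as $t \downarrow 0$.

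The main step is to compare $g(h(x) + tw + r(t))$ with $g(h(x) + tw)$. The first of these, divided by $t$, is what we have; the second, as $t \downarrow 0$, tends to $g'(h(x); w) = g'(h(x); h'(x;l))$ by the directional differentiability of $g$ at $h(x)$. Therefore it suffices to show
\[
\frac{g(h(x) + tw + r(t)) - g(h(x) + tw)}{t} \longrightarrow 0 \quad (t \downarrow 0).
\]
Here is where the Lipschitz assumption on $g$ enters decisively: if $L$ is a local Lipschitz constant for $g$ near $h(x)$, then for $t$ small enough the numerator is bounded in absolute value by $L\,\|r(t)\|$, so the quotient is bounded by $L\,\|r(t)\|/t$, which tends to $0$ since $r(t) = o(t)$. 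Combining this with the limit of the second quotient yields the claimed formula.

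The hard part, and the reason the Lipschitz hypothesis on $g$ cannot be dropped, is precisely the error-control step: without Lipschitz continuity, the perturbation $r(t)$ in the inner argument could produce an uncontrolled change in the outer function, and the limit of $g(h(x)+tw+r(t))/t$ would not in general agree with the limit along the straight ray $g(h(x)+tw)/t$. A minor technical point to handle carefully is that $g$ is only directionally differentiable (not differentiable), so I must make sure the limit defining $g'(h(x); w)$ is taken along the correct ray $h(x) + tw$ with $t \downarrow 0$, matching the structure of the expansion of $h$; the Lipschitz bound is exactly what lets me absorb the deviation of the true argument from this ray. Finally, since $l \in \Sn$ was arbitrary and the directional derivative $f'(x;l)$ exists for every such $l$, the identity holds for all $l$, completing the argument.
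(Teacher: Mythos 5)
Your proof is correct, but it is worth noting that the paper itself does not prove this lemma at all: it quotes the result from Demyanov--Rubinov, where it appears as a chain rule for an outer function that is \emph{Hadamard} directionally differentiable, combined with the separate fact (also cited) that a function which is directionally differentiable and locally Lipschitz near a point is automatically Hadamard directionally differentiable there. Your argument is a direct, self-contained proof that effectively inlines both cited ingredients: the decomposition
\[
\frac{g(h(x)+tw+r(t)) - g(h(x))}{t}
= \frac{g(h(x)+tw+r(t)) - g(h(x)+tw)}{t} + \frac{g(h(x)+tw) - g(h(x))}{t},
\]
with $w = h'(x;l)$ and $r(t)=o(t)$, together with the Lipschitz bound $L\,\|r(t)\|/t \to 0$ on the first term, is precisely the content of ``directionally differentiable $+$ Lipschitz $\Rightarrow$ Hadamard directionally differentiable''; the second term converging to $g'(h(x);w)$ is the chain-rule limit along the exact ray. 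The two routes are mathematically equivalent; the citation route is shorter and places the lemma within the established quasidifferential calculus, while your version has the advantage of being elementary and self-contained, and it makes visible exactly where the Lipschitz hypothesis is used (and why it cannot be dropped --- without it the perturbation $r(t)$ of the inner argument could change $g$ by an amount not of order $o(t)$). One small point handled correctly but worth stating explicitly in a write-up: for small $t$ both points $h(x)+tw$ and $h(x)+tw+r(t)$ lie in the neighborhood of $h(x)$ on which the Lipschitz constant $L$ is valid, which is what licenses the bound on the first term.
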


We prove here a chain rule for the directed subdifferential of the composition of two functions, 
with the first function (hence, the composition) defined on an interval. We 
apply this rule to derive a mean-value theorem for the directed subdifferential.

There is a difficulty to extend the chain rule to compositions of multivariate functions,
since the expected product of a matrix (the Jacobian) and a directed set (the directed subdifferential)
has been defined yet only for directed sets which are limits of differences of embedded
convex sets. 

The next proposition allows to change the inner function by its first Taylor approximation
within a composition without changing the directed subdifferential. 
This result can be seen as a motivation to demand Ioffe's axiom~($\mbox{SD}_7$)
for subdifferentials in~\cite[Chap.~2, Subsec.~1.5]{IOF:2000a}.

\begin{prop} \label{prop:dir_subdiff_taylor_approx}
   Let $x^0 \in \R^n$ and $f: \R^n \rightarrow \R$ be in the form
    \begin{align*}
       f(x) & = (g \circ \varphi)(x) \quad (x \in \R^n) \,,
    \end{align*}
   where $\varphi: \R^n \to \R^m$ is 
   %%% $\Cfvar{1}$ 
   Fr\'{e}chet differentiable in $x^0$,
   $g: \R^m \to \R$ is directed subdifferentiable and locally Lipschitz in 
   $\varphi(x^0)$. 
   Let $\widetilde{\varphi}(y) = \varphi(x^0) + \varphi'(x^0) (y-x^0)$
   for $y \in \R^n$. 

   Then, the directed subdifferential of $f$ and 
   %%% of $\widetilde{f}(\cdot) = (g \circ \widetilde{\varphi})(\cdot)$ 
   %%% coincide at $x^0$, i.e.
   $\widetilde{f} = g \circ \widetilde{\varphi}$ 
   coincide at $x^0$, i.e.
    \begin{align*}
       \dd f(x^0) & = \dd \widetilde{f}(x^0) \,.
    \end{align*}
\end{prop}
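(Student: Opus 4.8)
The plan is to exploit the fact, emphasized throughout the paper, that the directed subdifferential is built \emph{entirely} from the directional derivative. By Definition~\ref{def:ddd}, the object $\dd f(x^0)$ depends only on the function $l \mapsto f'(x^0;l)$ together with its recursive restrictions $f_l(\cdot) = f'(x^0; l + \Prj^\trans_{n-1,l}(\cdot))$, and the same holds for $\widetilde f$. Consequently it suffices to prove that $f$ and $\widetilde f$ have \emph{identical} directional derivatives at $x^0$, i.e.
$$
f'(x^0;l) = \widetilde{f}\,'(x^0;l) \qquad (l \in \Sn);
$$
once this is known, the restrictions $f_l = \widetilde{f}_l$ coincide as functions on $\R^{n-1}$, and the recursive construction yields the same directed set at every level. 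Equivalently, one may phrase this via Proposition~\ref{prop:subd_dir_deriv}: since $\dd f(x^0) = \dd [f'(x^0;\cdot)](0)$ and likewise for $\widetilde f$, equality of the two directional-derivative functions immediately forces equality of the directed subdifferentials.

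To establish the equality of the directional derivatives, first I would record that Fr\'echet differentiability of $\varphi$ at $x^0$ gives $\varphi'(x^0;l) = \varphi'(x^0)\,l$, and that the affine map $\widetilde\varphi$ satisfies both $\widetilde\varphi(x^0) = \varphi(x^0)$ and $\widetilde\varphi\,'(x^0;l) = \varphi'(x^0)\,l$ --- this is precisely the defining feature of the first-order Taylor approximation. Then I would apply the chain rule for directional derivatives, Lemma~\ref{lem:DirDerCompos}, to each composition. Its hypotheses hold in both cases: $\varphi$ and $\widetilde\varphi$ are directionally differentiable at $x^0$, and $g$ is directionally differentiable and locally Lipschitz at the common point $\varphi(x^0) = \widetilde\varphi(x^0)$. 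The lemma then delivers
$$
f'(x^0;l) = g'(\varphi(x^0);\varphi'(x^0)\,l) = g'(\widetilde\varphi(x^0);\widetilde\varphi\,'(x^0;l)) = \widetilde{f}\,'(x^0;l),
$$
which is exactly the required identity.

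Before this argument can be run, I must ensure that $\dd f(x^0)$ and $\dd \widetilde f(x^0)$ are well-defined, i.e.~that $f$ and $\widetilde f$ are directed subdifferentiable at $x^0$ in the sense of Definition~\ref{def:dsf}. For this I would note that $f'(x^0;\cdot) = g'(\varphi(x^0);\varphi'(x^0)(\cdot))$ is the directed subdifferentiable function $g'(\varphi(x^0);\cdot)$ precomposed with the fixed linear map $\varphi'(x^0)$; continuity in $l$ and the first-level boundedness on $\Sn$ follow since $\varphi'(x^0)$ maps the unit sphere to a bounded set and $g$ is locally Lipschitz, while the recursive boundedness condition must be checked to survive this composition. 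The same reasoning applies verbatim to $\widetilde f$, since $\widetilde\varphi$ is affine with the identical linear part. I expect this well-definedness step --- verifying that the recursively defined uniform bound persists under composition with the possibly non-injective operator $\varphi'(x^0)$ --- to be the only genuinely delicate point; the equality of the directional derivatives, and hence of the directed subdifferentials, is then immediate from the two displays above.
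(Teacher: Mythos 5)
Your proposal is correct and takes essentially the same route as the paper's own proof: both use Lemma~\ref{lem:DirDerCompos} together with $\widetilde{\varphi}(x^0)=\varphi(x^0)$ and $\widetilde{\varphi}\,'(x^0)=\varphi'(x^0)$ to get $f'(x^0;\cdot)=\widetilde{f}\,'(x^0;\cdot)$, and then conclude recursively (the paper by induction on the components of Definition~\ref{def:ddd}, you equivalently via Proposition~\ref{prop:subd_dir_deriv}) that the directed subdifferentials coincide. The well-definedness point you flag is likewise left implicit in the paper's proof; since the two functions have identical directional derivatives at $x^0$, directed subdifferentiability of one is equivalent to that of the other, so your argument is no less complete than the published one.
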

\begin{proof}
   Clearly, $\widetilde{\varphi}$ is differentiable with 
    \begin{align*}
       \widetilde{\varphi}(x^0) & = \varphi(x^0) \,, \quad
         \widetilde{\varphi}'(x^0) = \varphi'(x^0)
    \end{align*}
   as the first-order Taylor approximation of $\varphi$.

   We claim that the directed subdifferential of $g \circ \widetilde{\varphi}$ 
   in $x^0$ coincides with the one of
   $g \circ \varphi$. Indeed, by the differentiability of $\varphi$ 
   in $x^0$ and the Lipschitz continuity of $g$,
   we can apply Lemma~\ref{lem:DirDerCompos} which shows
    \begin{align*}
       \widetilde{f}'(x^0; l) 
         & = g'(\widetilde{\varphi}(x^0); \widetilde{\varphi}'(x^0) l) 
           = g'(\varphi(x^0); \varphi'(x^0) l) = f'(x^0; l) \quad (l \in \R^n)
    \end{align*}
   and therefore the equality of
   the (second) component of $\dd \widetilde{f}(x^0)$ 
   and $\dd f(x^0)$. \\
   In its first component for $n > 1$, the directed subdifferential of the function $\widetilde{f}_l$ appears.
   For this function we can use the previous equality which yields 
    \begin{align*}
       \widetilde{f}_l(y) & = \widetilde{f}'(x^0; l + \Prj^\trans_{n-1,l} y)
         = f'(x^0; l + \Prj^\trans_{n-1,l} y) = f_l(y)
    \end{align*}
   for $y \in \R^{n-1}$. Hence, by induction
    \begin{align*}
       \dd \widetilde{f}(x^0) & = \dd f(x^0) \,.
    \end{align*}
\end{proof}

The following proposition is a special form of a chain rule for
a composition of a directed subdifferentiable and a continuously differentiable function.
It is used later 
%for the calculation of the directed subdifferential at an intermediate point 
in the proof of the mean-value theorem. 

\begin{prop} \label{prop:spec_chain_rule}
   Let $t_0 \in \R$ and $f: \R \rightarrow \R$ be in the form
    \begin{align*}
       f(t) & = (g \circ \varphi)(t) \quad (t \in \R)
    \end{align*}
   where $\varphi: \R \to \R^m$ is differentiable at $t_0$ 
   and $g: \R^m \to \R$ is directed subdifferentiable and locally Lipschitz at 
   $\varphi(t_0)$. 

Then, $f: \R \rightarrow \R$ is directed subdifferentiable at $t_0$ with
\begin{align}\label{eq:ChainRuleOneDim}
\dd f(t_0) = \overrightarrow{[-g'(\varphi(t_0); -\varphi'(t_0)), g'(\varphi(t_0); \varphi'(t_0))]}.
\end{align}
\end{prop}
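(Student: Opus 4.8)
The plan is to reduce the statement to the one-dimensional case of Definition~\ref{def:ddd} together with the chain rule for directional derivatives in Lemma~\ref{lem:DirDerCompos}. Since $f$ maps $\R$ to $\R$, once I know that $f$ is directionally differentiable at $t_0$ it is automatically directed subdifferentiable (only the trivially bounded first-order derivatives are involved, as noted after Definition~\ref{def:dsf}), and its directed subdifferential is recovered simply by reading off the two one-sided derivatives $f'(t_0;-1)$ and $f'(t_0;1)$ via Definition~\ref{def:ddd}(i). Hence the whole task reduces to computing these two numbers.

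To do so I would verify the hypotheses of Lemma~\ref{lem:DirDerCompos} with inner function $\varphi$ and outer function $g$, taking $n=1$. The inner function $\varphi:\R\to\R^m$ is differentiable at $t_0$, hence directionally differentiable there, with the directional derivative equal to the linear part applied to the direction: $\varphi'(t_0;l)=l\,\varphi'(t_0)$ for $l\in\R$. The outer function $g$ is directed subdifferentiable at $\varphi(t_0)$ and therefore directionally differentiable there (directional differentiability being built into Definition~\ref{def:dsf}), and it is locally Lipschitz at $\varphi(t_0)$ by assumption. Thus Lemma~\ref{lem:DirDerCompos} applies and shows that $f=g\circ\varphi$ is directionally differentiable at $t_0$ with
$$f'(t_0;l)=g'\bigl(\varphi(t_0);\varphi'(t_0;l)\bigr)=g'\bigl(\varphi(t_0);l\,\varphi'(t_0)\bigr),\qquad l\in\{-1,1\}.$$

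Evaluating at $l=1$ and $l=-1$ yields $f'(t_0;1)=g'(\varphi(t_0);\varphi'(t_0))$ and $f'(t_0;-1)=g'(\varphi(t_0);-\varphi'(t_0))$. Since $f$ is now known to be univariate and directionally differentiable, it is directed subdifferentiable, and Definition~\ref{def:ddd}(i) gives
$$\dd f(t_0)=\overrightarrow{[-f'(t_0;-1),f'(t_0;1)]}=\overrightarrow{[-g'(\varphi(t_0);-\varphi'(t_0)),g'(\varphi(t_0);\varphi'(t_0))]},$$
which is exactly~\eqref{eq:ChainRuleOneDim}.

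The argument is essentially an assembly of earlier results, so I do not anticipate a serious obstacle; the one point requiring care is that the inner directional derivative $\varphi'(t_0;l)=l\,\varphi'(t_0)$ is an arbitrary vector of $\R^m$ rather than a unit direction, so I must ensure that $g'(\varphi(t_0);\cdot)$ is interpreted as a genuine directional derivative in the limit sense, which is defined for every direction in $\R^m$ and not only unit ones. This is also why the local Lipschitz assumption on $g$ is essential: it is precisely what Lemma~\ref{lem:DirDerCompos} needs to guarantee that the limit defining $f'(t_0;l)$ exists and equals the composed expression.
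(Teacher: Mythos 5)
Your proof is correct and takes exactly the route of the paper: the paper's own proof is the one-line remark that the result ``follows directly from Lemma~\ref{lem:DirDerCompos} and the definition of the directed subdifferential,'' and your write-up simply fills in those details (verifying the hypotheses of Lemma~\ref{lem:DirDerCompos}, using positive homogeneity of $\varphi'(t_0;\cdot)$, noting that a univariate directionally differentiable function is automatically directed subdifferentiable, and reading off Definition~\ref{def:ddd}(i) at $l=\pm 1$). Nothing further is needed.
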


\begin{proof}
The proof follows directly from Lemma~\ref{lem:DirDerCompos} and the definition of the directed subdifferential.
\end{proof}

\begin{remark}
Observe that the expression \eqref{eq:ChainRuleOneDim} cannot be simply written as 
$\dd f(t_0) = \varphi'(t_0) \dd g(\varphi(t_0))$,  
since then $\varphi'(t_0)$ has to play the role of a (yet undefined) linear operator 
from $\mathcal{D}(\R^m)$ to $\mathcal{D}(\R)$ satisfying a property analogous to Ioffe's 
axiom~($\mbox{SD}_7$) in \cite{IOF:2000a}. Of course, this linear operator may be easily defined in the
case when $g$ is differentiable and the directed sets involved are embedded singletons.
%
%It is not difficult to check that the latter equality holds for $m=1$ and $\varphi'(t_0)\geq 0$, but not necessarily for $\varphi'(t_0)<0$. This happens because this chain rule must mimic the shape of Ioffe's 7th axiom , and must involve a (yet undefined) linear operation on the directed set, which is different with scalar multiplication. 
%
%Of course, when the function $g$ is differentiable, we have $g'(x;\pm 1) = \pm g'(x)$, and in this case the chain rule can be formally written as $\dd f(t_0) = \varphi'(t_0) \dd g(\varphi(t_0))$, which shows that Proposition~\ref{prop:spec_chain_rule} generalizes the chain rule for a smooth outer function $g(\cdot)$.
\end{remark}

In the special case when the inner function $\varphi$ is affine, we get the following 
formula for the directed subdifferential of the composition from the previous
proposition.

\begin{corollary} \label{cor:affine_chain_rule}
   Let $x^0, x^1 \in \R^n$ and let $g: \R^n \rightarrow \R$ be directed subdifferentiable
   and Lipschitz continuous on the line segment $\co\{x^0, x^1\} = \{ x^0 + t (x^1 - x^0) \,|\, t \in [0,1] \, \}$.
   Then, $f: [0,1] \rightarrow \R$ with $f(t) = g(x^0 + t(x^1 - x^0))$ is directed subdifferentiable 
   for every $t \in [0,1]$ with
    \begin{align*}
       \dd f(t) & = \direct{ [-g'(x^0 + t(x^1 - x^0); -(x^1 - x^0)), g'(x^0 + t(x^1 - x^0); x^1 - x^0)] }\,.
    \end{align*}
\end{corollary}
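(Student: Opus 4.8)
The plan is to recognize this as the immediate specialization of Proposition~\ref{prop:spec_chain_rule} to an affine inner function. First I would set $\varphi(t) := x^0 + t(x^1 - x^0)$ and observe that $\varphi:\R\to\R^n$ is affine, hence Fr\'{e}chet (and in particular directionally) differentiable at every $t\in\R$, with the constant derivative $\varphi'(t) = x^1 - x^0$. Since $\varphi(t) = x^0 + t(x^1-x^0)\in\co\{x^0,x^1\}$ for every $t\in[0,1]$, the standing hypotheses on $g$ --- directed subdifferentiability and Lipschitz continuity along the segment --- furnish exactly the two properties of $g$ at the point $\varphi(t)$ that Proposition~\ref{prop:spec_chain_rule} requires of its outer function (taking $m=n$ and $t_0=t$).

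Then I would simply invoke that proposition, which yields that $f = g\circ\varphi$ is directed subdifferentiable at each $t$ with
\begin{align*}
\dd f(t) = \overrightarrow{[-g'(\varphi(t); -\varphi'(t)), g'(\varphi(t); \varphi'(t))]}.
\end{align*}
Substituting $\varphi(t) = x^0 + t(x^1-x^0)$ and $\varphi'(t) = x^1-x^0$ into this expression reproduces the claimed formula verbatim, which completes the argument.

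There is essentially no hard step here: the entire content lies in Proposition~\ref{prop:spec_chain_rule}, and the corollary is merely its affine instance. The only point I would flag for care is the behavior at the two endpoints $t\in\{0,1\}$: there the line segment extends away from $\varphi(t)$ in only one of the two directions $\pm(x^1-x^0)$, so one might worry whether both directional derivatives $g'(\varphi(t); x^1-x^0)$ and $g'(\varphi(t); -(x^1-x^0))$ on the right-hand side are well defined. This causes no difficulty, because directed subdifferentiability of $g$ at the point $\varphi(t)$ already forces $g'(\varphi(t); l)$ to exist for \emph{every} direction $l\in\R^n$, so in particular for both $l = x^1-x^0$ and $l = -(x^1-x^0)$; the restriction of the parameter to $[0,1]$ plays no role in the formula itself.
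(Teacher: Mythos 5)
Your proposal is correct and follows exactly the route the paper intends: the corollary is stated there as an immediate specialization of Proposition~\ref{prop:spec_chain_rule} to the affine map $\varphi(t) = x^0 + t(x^1-x^0)$ with constant derivative $\varphi'(t) = x^1 - x^0$, with no further argument needed. Your added remark about the endpoints $t\in\{0,1\}$ is a sensible clarification and is resolved just as you say, since directed subdifferentiability of $g$ at $\varphi(t)$ guarantees $g'(\varphi(t);l)$ exists for every direction $l\in\R^n$.
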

%
%\begin{proof}
%   This follows directly from Proposition~\ref{prop:dir_subdiff_taylor_approx} by setting
%   $\varphi(t) = x^0 + t(x^1 - x^0)$.
%\end{proof}

The following mean-value theorem  is an analogue
of~\cite[Proposition~1.115]{Morduk2006Book1} for the directed subdifferential.
In~\cite{Morduk2006Book1} the mean-value theorem is formulated with the
basic symmetric subdifferential which has a very close connection to the
visualization of the directed subdifferential in $\R^2$, see~\cite{BaiFarRosh2010}.

\begin{theorem}[\bf mean-value theorem] \label{theo:mean_value} 
   \mbox{} \\
   Let $x^0,x^1 \in \R^n$ and $g: \R^n \rightarrow \R$ be directed subdifferentiable 
   on the open segment $A=%\co\{x^0, x^1\} = 
   \{ x^0 + t (x^1 - x^0) \,|\, t \in (0,1) \, \}$
   and Lipschitz continuous on $\cl A$. 

   Then there exists $\hat{t} \in (0,1)$ such that
    \begin{equation} \label{eq:mvt_1}
       \Embed{1}{ \{ g(x^1) - g(x^0) \} } 
         \leq \dd g(x^0 + \cdot\, (x^1 - x^0))(\hat{t}\,) \,.
    \end{equation}
   %
   %%% where $\widehat{x}=x^0+ \widehat{t}(x^1-x^0)$.
\end{theorem}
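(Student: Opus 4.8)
The plan is to reduce \eqref{eq:mvt_1} to a one-dimensional extremal problem and then invoke the necessary optimality condition of Proposition~\ref{prop:nec_opt_cond}. Write $f(t):=g(x^0+t(x^1-x^0))$ for $t\in[0,1]$ and $c:=g(x^1)-g(x^0)=f(1)-f(0)$. Lipschitz continuity of $g$ on $\cl A$ makes $f$ continuous on $[0,1]$, while Corollary~\ref{cor:affine_chain_rule} shows that $f$ is directed subdifferentiable on $(0,1)$ with $\dd f(t)=\direct{[-g'(x^0+t(x^1-x^0);-(x^1-x^0)),\,g'(x^0+t(x^1-x^0);x^1-x^0)]}$. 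Since $\Embed{1}{\{c\}}=\direct{[c,c]}$, unfolding Definition~\ref{def:order} in dimension one shows that \eqref{eq:mvt_1} at a point $\hat t$ is equivalent to the two scalar inequalities $f'(\hat t;1)\ge c$ and $f'(\hat t;-1)\ge -c$.

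First I would introduce the auxiliary function $\phi(t):=f(t)-t\,c$, which is continuous on $[0,1]$, directed subdifferentiable on $(0,1)$, and satisfies $\phi(0)=\phi(1)=g(x^0)$. By Lemma~\ref{lem:DirDer} we have $\phi'(t;\pm1)=f'(t;\pm1)\mp c$, so the two scalar inequalities above read exactly $\phi'(\hat t;1)\ge0$ and $\phi'(\hat t;-1)\ge0$, i.e.\ $\direct{0}\le\dd\phi(\hat t)$ --- which is the conclusion of Proposition~\ref{prop:nec_opt_cond} applied to $\phi$ at an interior local minimiser. Hence, once a minimiser $\hat t\in(0,1)$ of $\phi$ is secured, the sum rule (Proposition~\ref{prop:DDSum}), the identity $\dd(t\mapsto tc)(\hat t)=\Embed{1}{\{c\}}$, and the translation invariance of the order together let me rewrite $\direct{0}\le\dd\phi(\hat t)=\dd f(\hat t)-\Embed{1}{\{c\}}$ as $\Embed{1}{\{c\}}\le\dd f(\hat t)$, which is precisely \eqref{eq:mvt_1}.

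The remaining and genuinely delicate step --- the main obstacle --- is to guarantee that $\phi$ attains a minimum at an \emph{interior} point. Continuity on the compact interval $[0,1]$ yields a global minimiser, and the identity $\phi(0)=\phi(1)$ is what I would exploit: if this minimum lies strictly below the common endpoint value, or if $\phi$ is constant, an interior minimiser exists and the argument closes. The hard case is $\phi(t)>\phi(0)$ for every $t\in(0,1)$, where the interior extremum forced by $\phi(0)=\phi(1)$ is a \emph{maximum}; applying Proposition~\ref{prop:nec_opt_cond} to $-\phi$ there produces only the reversed relation $\dd f(\hat t)\le\Embed{1}{\{c\}}$. Reconciling this case with \eqref{eq:mvt_1} --- and thereby matching the symmetric, two-sided character of the Mordukhovich basic-subdifferential mean-value theorem to which \eqref{eq:mvt_1} is the analogue --- is exactly where the non-convexity of $g$ must be confronted, and I expect it to be the technical heart of the proof.
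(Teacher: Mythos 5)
Your construction is the paper's own, in different notation: the paper sets $\varphi(t)=x^0+t(x^1-x^0)$, $f_1=g\circ\varphi$, $f_2(t)=t\,(g(x^0)-g(x^1))$ and works with $f=f_1+f_2$, which is exactly your $\phi$; it then uses continuity of $f$ on $[0,1]$, the identity $f(0)=f(1)=g(x^0)$, Weierstra{\ss} to produce an interior extremum, Proposition~\ref{prop:nec_opt_cond} to obtain $\direct{0}\leq\dd f(\hat t\,)$, and Propositions~\ref{prop:DDSum} and~\ref{prop:spec_chain_rule} (with Remark~\ref{rem:diff_fct} and Corollary~\ref{cor:affine_chain_rule}) plus the same translation argument you describe to convert this into \eqref{eq:mvt_1}. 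Up to the point where you stop, your argument and the paper's coincide, including the unfolding of the one-dimensional order into the two scalar inequalities.

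The difference is entirely in your ``hard case'', and there your analysis is sharper than the paper's. The paper disposes of that case in one clause: whether the interior extremum is a minimum or a maximum, it invokes Proposition~\ref{prop:nec_opt_cond} to assert $\direct{0}\leq\dd f(\hat t\,)$. But at a local \emph{maximum} that proposition yields only $\direct{0}\leq-\dd f(\hat t\,)$, which after the same algebra gives the reversed inequality $\dd g(x^0+\cdot\,(x^1-x^0))(\hat t\,)\leq\Embed{1}{\{g(x^1)-g(x^0)\}}$; so the step you refused to gloss over is precisely the step at which the paper's proof is invalid. Moreover, no argument can close this case, because the statement fails in it: take $n=1$, $g(x)=-|x|$, $x^0=-1$, $x^1=1$, so that $g(x^1)-g(x^0)=0$ and $f_1(t)=-|2t-1|$. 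Then \eqref{eq:mvt_1} requires some $\hat t\in(0,1)$ with both $f_1'(\hat t\,;1)\geq 0$ and $f_1'(\hat t\,;-1)\geq 0$, whereas $f_1'(\hat t\,;-1)=-2$ for $\hat t\leq 1/2$ and $f_1'(\hat t\,;1)=-2$ for $\hat t\geq 1/2$, so the inequality fails at every interior point. What the minimum/maximum dichotomy genuinely proves is the disjunction: there exists $\hat t\in(0,1)$ such that either \eqref{eq:mvt_1} or the reversed inequality holds --- equivalently, in visualization form, $g(x^1)-g(x^0)\in V_1\bigl(\dd g(x^0+\cdot\,(x^1-x^0))(\hat t\,)\bigr)$, which is the proper analogue of the symmetric-subdifferential mean-value theorem of Mordukhovich that the theorem is modelled on. Your proposal, completed by running the maximum branch through the second half of Proposition~\ref{prop:nec_opt_cond} and weakening the conclusion to this disjunction, is a correct proof of the corrected statement; as written, the obstacle you identified lies in the theorem itself, not in your argument.
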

\begin{proof}
   Define the affine function $\varphi: [0,1] \to \R^n$ 
   by $\varphi(t) = x^0 + t (x^1 - x^0)$,
   $f_1(t)=g(\varphi(t))$
   and $f_2(t)=t (g(x^0) - g(x^1))$ for $t \in [0,1]$. 
   Then, $\varphi([0,1]) = \cl A$ and we set the function
   $f: [0,1] \to \R$ by $f(t)=f_1(t)+f_2(t)$.

   Clearly, $\varphi,f_2$ are $C^1$ on $[0,1]$ and 
   $f$ is directed subdifferentiable by Proposition~\ref{prop:DDSum} 
   %%% and \ref{prop:spec_chain_rule}.
   and Corollary~\ref{cor:affine_chain_rule} with
    \[
       \dd f(t)  = \dd f_1(t) + \dd f_2(t) \,.
    \]
   %
   % \begin{align*}
   %    f(t) & = \underbrace{g(\varphi(t))}_{=: f_1(t)} + \underbrace{t (g(x^0) - g(x^1))}_{=: f_2(t)} \,.
   % \end{align*}
   %

   The function $f$ is also continuous on $[0,1]$ %$\co\{x^0, x^1\}$ 
   and satisfies $f(0) = f(1) = g(x^0)$. 
   Therefore, either $f$ is constant on $[0,1]$ or
   it attains its global extremum (minimum or maximum)  at some point in $(0,1)$ 
   by the classical Weierstra\ss{} theorem.
   In both cases, $f$ has a local extremum in the open interval $(0,1)$ and by 
   Proposition~\ref{prop:nec_opt_cond} there exists a point $\hat{t}$ such that 
    \begin{equation}
       \label{eq:o_in_vis}
       \direct{0} \leq \dd f(\hat{t}\,) \,.
    \end{equation}
   For the function $f_1$ we apply Proposition~\ref{prop:spec_chain_rule} to obtain 
   % and the exact sum rule in Proposition~\ref{prop:DDSum}:
   %
    \begin{align}
       %%% \dd f_1(t) & = (x^1 - x^0)^\trans (\dd g)(\varphi(t)) \,, \\
       \dd f_1(\hat{t}\,) 
       %%% & = \dd (g \circ \varphi)(t) = (\dd g(x^0 + \cdot \, (x^1 - x^0)))(t) \\
       %%% & = \Embed{1}{ [-g'(x^0 + t(x^1 - x^0); -(x^1 - x^0)), g'(x^0 + t(x^1 - x^0); x^1 - x^0)] } \,, \\
       & = \overrightarrow{ [-g'(\varphi(\hat{t}\,); -\varphi'(\hat{t}\,)), 
                              g'(\varphi(\hat{t}\,); \varphi'(\hat{t}\,))] } 
           \nonumber \\
       & = \overrightarrow{ [-g'(\widehat{x}; -(x^1 - x^0)), 
                              g'(\widehat{x}; x^1 - x^0)] } \,,
           \label{eqar:f1}
    \end{align}
   where $\widehat{x}=x^0+ \hat{t}(x^1-x^0)$. 
   Since the function $f_2(t) = t (g(x^0) - g(x^1))$ is differentiable, 
   we apply Remark~\ref{rem:diff_fct}:
    \begin{align*}
       \dd f_2(t) & = \Embed{1}{ \{ f_2'(t) \} } = \Embed{1}{ \{ g(x^0) - g(x^1) \} }
    \end{align*}
   Hence,
    \begin{align*}	
       \dd f(\hat{t}\,) & = \dd f_1(\hat{t}\,) + \dd f_2(\hat{t}\,) \\
       & = \Embed{1}{ \{ g(x^0) - g(x^1) \} } %\\
       % & \phantom{ = }\mbox{} + \Embed{1}{
         +\overrightarrow{[-g'(\hat{x}; -(x^1 - x^0)), g'(\hat{x}; x^1 - x^0)] }. 
    \end{align*}
   The inverse of an embedded scalar in $\Done$ is the embedded negative scalar
   so that~\eqref{eq:o_in_vis} transfers to the asserted inequality
    \begin{align*}	
        \Embed{1}{ \{ g(x^1) - g(x^0) \} }
          & \leq \overrightarrow{[-g'(\hat{x}; -(x^1 - x^0)), g'(\hat{x}; x^1 - x^0)] } \,.
    \end{align*}
\end{proof}

\section{Conclusions}

%%% While we focused on the class of directed subdifferentiable functions
%%% in~\cite{BaiFarRosh2014a} and explained that it extends the class of quasidifferentiable
%%% functions, we emphasized the calculus rules for the directed subdifferential in this paper.
%%% Let us stress that the calculus rules are given by equalities, even for the sum rule
%%% and for the subdifferential for maximum or minimum of directed subdifferentiable functions.
%%% Thus, we obtained similar formulas as for the quasidifferential of Demyanov/Rubinov and
%%% were able to avoid inclusions in the calculation of the directed subdifferential for
%%% sum, product, max-min functions, \ldots\ .
%%% The chain rule for the directed subdifferential already available 
%%% for quasidifferentiable functions is extended for directed subdifferentiable functions,
%%% although we do not assume a delta-convex structure of the function or its
%%% directional derivative.
%%% 
%%% There is not only a close link of the visualization of the directed subdifferential, the so-called
%%% Rubinov subdifferential, to convex subdifferentials (i.e., the Dini and the Michel-Penot 
%%% subdifferential), but also to the (nonconvex) basic subdifferential
%%% of Mordukhovich (see~\cite{BaierFarkhiDC,BaiFarRosh2010}). 
%%% The availability of the mean-value theorem following a proof strategy
%%% of Mordukhovich in~\cite{Morduk2006Book1} and the possibility of the numerical
%%% calculation of the directed subdifferential hopefully opens the door to numerical
%%% algorithms for nonsmooth optimization problems.

While in \cite{BaiFarRosh2014a} we introduced the class of directed subdifferentiable 
functions, this paper is devoted to calculus rules for their directed subdifferential.
Let us stress that the calculus rules are given by equalities, even for the sum rule 
and for  maximum or minimum of directed subdifferentiable functions,
without any delta-convex structure of the functions or their directional
derivaties.

Thus, we obtain exact calculus rules, similar formulas to those for the quasidifferential of
Demyanov/Rubinov, and avoid inclusions in the calculation of the directed subdifferential. 
Also the chain rule is extended here from the class of quasidifferentiable to 
directed subdifferentiable functions. The visualization of the directed subdifferential 
(the so-called Rubinov subdifferential) is related not only to convex subdifferentials, 
as the Dini and the Michel-Penot subdifferential, but also to the (nonconvex) basic
subdifferential of Mordukhovich \cite{BaierFarkhiDC,BaiFarRosh2010}. 
The mean-value 
theorem and the numerical calculation of the directed subdifferential may open the
way to numerical algorithms for nonsmooth optimization problems.
More concrete applications of the directed (or Rubinov) subdifferential to nonsmooth 
optimization problems and algorithms are the subject of a forthcoming work.

% Appendices go here. If there is one appendix the title is just Appendix. If there are appendices, the titles are Appendix A, Appendix B, and so on.  

\section*{Acknowledgements}
   We thank Wolfgang Achtziger for motivating us to study the mean-value theorem. 
   This work is partially supported by The Hermann Minkowski Center for Geometry 
   at Tel Aviv University, Tel Aviv, Israel. 

%%% \section*{References}
%%% mit BibTeX behandelt und dann ``*.bbl'' eingeladen
%%% \bibliographystyle{plain}
%%% \bibliography{references_Proc_Part2}

%%% \def\cprime{$'$} \def\cydot{\leavevmode\raise.4ex\hbox{.}}
%%%   \def\cfac#1{\ifmmode\setbox7\hbox{$\accent"5E#1$}\else
%%%   \setbox7\hbox{\accent"5E#1}\penalty 10000\relax\fi\raise 1\ht7
%%%   \hbox{\lower1.15ex\hbox to 1\wd7{\hss\accent"13\hss}}\penalty 10000
%%%   \hskip-1\wd7\penalty 10000\box7} \def\Dbar{\leavevmode\lower.6ex\hbox to
%%%   0pt{\hskip-.23ex \accent"16\hss}D} \def\dbar{\leavevmode\hbox to
%%%   0pt{\hskip.2ex \accent"16\hss}d}
%%% \begin{thebibliography}{10}
%%% 
%%% \bibitem{BaiFar2001}
%%% R.~Baier and E.~Farkhi.
%%% \newblock Differences of {C}onvex {C}ompact {S}ets in the {S}pace of {D}irected
%%%   {S}ets, {P}art {I}: {T}he {S}pace of {D}irected {S}ets.
%%% \newblock {\em Set-Valued Anal.}, 9(3):217--245, 2001.
%%%
%%% \end{thebibliography}

\def\cprime{$'$} \def\cydot{\leavevmode\raise.4ex\hbox{.}}
  \def\cfac#1{\ifmmode\setbox7\hbox{$\accent"5E#1$}\else
  \setbox7\hbox{\accent"5E#1}\penalty 10000\relax\fi\raise 1\ht7
  \hbox{\lower1.15ex\hbox to 1\wd7{\hss\accent"13\hss}}\penalty 10000
  \hskip-1\wd7\penalty 10000\box7} \def\Dbar{\leavevmode\lower.6ex\hbox to
  0pt{\hskip-.23ex \accent"16\hss}D} \def\dbar{\leavevmode\hbox to
  0pt{\hskip.2ex \accent"16\hss}d}

\end{document}